\documentclass[a4paper,12pt]{amsart}
\usepackage{graphicx}
\usepackage[psamsfonts]{amssymb}
\usepackage{amscd,pifont}
\usepackage[usenames]{color}
\usepackage{amssymb,amsmath,amsthm}
\usepackage[colorlinks,linkcolor=red,anchorcolor=blue,citecolor=blue]{hyperref}
\makeatletter
\@namedef{subjclassname@2020}{\textup{2020} Mathematics Subject Classification}
\makeatother

\newtheorem{theorem}{Theorem}[section]
\newtheorem{lemma}[theorem]{Lemma}
\newtheorem{proposition}[theorem]{Proposition}
\newtheorem{corollary}[theorem]{Corollary}
\newtheorem{definition}[theorem]{Definition}

\newcommand{\abs}[1]{\left| #1 \right|}

\def\N{\mathbb{N}}
\def\Z{\mathbb{Z}}

\def\Zp{\mathbb{Z}_{p}}
\def\Qp{\mathbb{Q}_{p}}

\begin{document}

\title{A characterization  of compact open spectral sets in $\Qp^d$}

\author{Mamateli Kadir}
\address{School of Mathematics and Statistics, Kashi University, Kashi, Xinjiang, 844000  China}

\email{mamatili880@163.com}


\begin{abstract}
Let $\Omega \subset \Qp^d$ be a compact open set. Such a set, without lose of generality,  admits a representation
\(\Omega = \bigsqcup_{c \in C} (c + p^n\Zp^d)\), where $C \subset (\Z/p^n\Z)^d$ and $n \in\N$.
We prove that $\Omega$ tiles $\Qp^d$ by translation if and only if $C$ tiles $(\Z/p^n\Z)^d$ by translation.
Moreover, $\Omega$ is a spectral set in $\Qp^d$ if and only if $C$ is a spectral set in $(\Z/p^n\Z)^d$.
\end{abstract}

\subjclass[2020]{Primary 43A25;  Secondary 43A70, 26E30.}

\keywords{$p$-adic space, compact open set, spectral set, translational tile.}

\maketitle

\section{Introduction}

The celebrated \textbf{Fuglede's conjecture}, also known as the \textbf{spectral set  conjecture},
proposed by Bent Fuglede in 1974~\cite{Fuglede1974}. It stands as one
of the most intriguing problems at the intersection of harmonic analysis, spectral theory, and geometric measure theory.
Originally formulated for Euclidean spaces, the conjecture states that a Lebesgue measurable set \(\Omega \subset \mathbb{R}^d\) of
positive finite measure is a \textbf{spectral set}, that is, the Hilbert space \(L^2(\Omega)\)
admits an orthogonal basis of exponential functions
\(\{e^{2\pi i \lambda \cdot x} : \lambda \in \Lambda\}\) for some discrete set \(\Lambda \subset \mathbb{R}^d\),
if and only if it \textbf{tiles} \(\mathbb{R}^d\) by translation. The latter means there exists a set \(T \subset \mathbb{R}^d\)
such that \(\{\Omega + t\}_{t \in T}\) partitions \(\mathbb{R}^d\) up to null sets. This elegant statement bridges the
seemingly disparate concepts of spectral theory and translational tilings, suggesting a profound duality between
analytic and combinatorial structures.

The conjecture was originated from Fuglede's study of extensions of partial differential operators
acting on  $C_c^\infty(\Omega)$ to commuting self-adjoint operators on $L^2(\Omega)$, where spectral
sets appear naturally as domains supporting such operators.

Early positive results included Fuglede's own proof under
the assumption that either the spectrum \(\Lambda\) or the tiling set \(T\) is a lattice~\cite{Fuglede1974}. Subsequent
research revealed both confirmations and counterexamples, painting a nuanced picture of the conjecture's validity.

In the positive direction, it was shown that in \(\mathbb{R}^2\), convex planar domains are spectral exactly when they
are parallelograms or centrally symmetric hexagons~\cite{Iosevich-Katz-Tao2008}. This confirmed the conjecture for convex
domains in the plane, recently   it was  proved for convex domains in $\mathbb{R}^d$ for all $d\geq1$ \cite{LM21}.
Additionally, for specific classes such as the unit cube \([0,1]^d\), it was established that
spectrality and tiling are equivalent~\cite{Iosevich-Pedersen1998,Lagarias-Reed-Wang2000}.
For unions of two intervals in \(\mathbb{R}\),  the conjecture also holds~\cite{Laba2001}.

On the negative side, several classes of sets were shown not to be spectral, such as triangles~\cite{Fuglede1974}, any
non-symmetric convex domains~\cite{Kolountzakis2000}, balls in \(\mathbb{R}^d\) for \(d \ge 2\)~\cite{Fuglede2001},
and any convex domains with smooth boundary~\cite{Iosevich-Katz-Tao}. More dramatically, Tao~\cite{Tao2004} disproved
the conjecture in high dimensions by constructing a spectral set in \(\mathbb{R}^5\) that does not tile.
Subsequent work produced counterexamples for both directions in dimensions \(d \ge 3\)~\cite{FMM2006,KM2006,KM2006a,Matolcsi2005}.
As of today, the conjecture remains open in dimensions \(d = 1\) and \(2\) for both implications,
though significant partial results exist.

A natural generalization of Fuglede's conjecture to locally compact abelian (LCA) groups has attracted
considerable attention. Let \(G\) be  a second countable LCA group with Haar measure \(\mathfrak{m}\),
and  $\widehat{G}$ be the dual group of $G$ consisting of the continuous characters of $G$. A Borel set
\(\Omega \subset G\) with \(0<\mathfrak{m}(\Omega)<\infty\) is called spectral if there exists
a set $\Lambda\subset\widehat{G}$ forming an orthogonal  basis for the space $L^2(\Omega)$.
A set \(\Omega \subset G\) is said to tile \(G\) by translation if there exists a set \(T \subset G\)
such that \(\{ \Omega + t : t \in T \}\) partitions \(G\) up to measure zero.

This generalized formulation reveals connections with diverse areas including abstract harmonic analysis,
combinatorics, and operator theory. It also allows for testing the conjecture in settings with different
topological and algebraic structures, potentially revealing which aspects of Euclidean space are essential
for the conjecture to hold.

The field of \(p\)-adic numbers \(\mathbb{Q}_p\) and its vector spaces \(\mathbb{Q}_p^d\) offer a particularly
compelling setting for investigating the Fuglede's conjecture. As a non-Archimedean local field, \(\mathbb{Q}_p\)
possesses a totally disconnected topology that imposes strong regularity conditions on measurable sets.
Unlike \(\mathbb{R}^d\), where spectral sets and tiles can have highly irregular boundaries, in \(\mathbb{Q}_p\)
any set of positive Haar measure that is either spectral or a tile must be \textbf{compact open} and
\textbf{$p$-homogeneous}~\cite{FFS}. 

The harmonic analysis in \(\mathbb{Q}_p^d\) is both rich and tractable. The dual group \(\widehat{\mathbb{Q}}_p^d\) is isomorphic to \(\mathbb{Q}_p^d\) itself, and Fourier transforms exhibit a clean duality between local constancy and compact support.
Furthermore, the absence of traditional lattices is compensated by the notion of \textbf{quasi-lattices},  a discrete set that play an
analogous role in the \(p\)-adic setting.

Recent work has established that the Fuglede conjecture holds in one dimensional space \(\mathbb{Q}_p\).
However, this conjecture is false in \(\mathbb{Q}_p^d\) for \(d\geq3\), and remains  open in \(\mathbb{Q}_p^2\) \cite{FFLS}.
The proof in \(\mathbb{Q}_p\) relies crucially on the compact open structure of such sets. Our present work deepens this
understanding by establishing an explicit reduction to finite combinatorial problems.

Let \(\Omega \subset \mathbb{Q}_p^d\) be a compact open set. By standard properties of the \(p\)-adic topology,
after translation and dilation we may assume
\[
\Omega = \bigsqcup_{c \in C} \left( c + p^{n} \mathbb{Z}_{p}^{d} \right),
\]
where \(n \in \mathbb{N}\) and \(C\) is a finite subset of the quotient group \((\mathbb{Z}/p^n\mathbb{Z})^d\),
identified with digit vectors in \(\{0,1,\dots,p^n-1\}^d\). This representation expresses \(\Omega\) as a
disjoint union of cosets of the compact open subgroup \(p^n\mathbb{Z}_p^d\), with \(C\) serving as the ``digit set"
determining which cosets are included.

Our main results establishes a precise equivalence between the tiling and spectral properties of \(\Omega\)
in the continuous \(p\)-adic setting and those of its digit set \(C\) in the finite abelian group
\(G = (\mathbb{Z}/p^n\mathbb{Z})^d\).

\begin{theorem}\label{main}
Let \(\Omega\) and \(C\) be as above.
\begin{enumerate}
    \item \(\Omega\) tiles \(\mathbb{Q}_p^d\) by translation if and only if \(C\) tiles \(G\) by translation.
    \item \(\Omega\) is a spectral set in \(\mathbb{Q}_p^d\) if and only if \(C\) is a spectral set in \(G\).
\end{enumerate}
\end{theorem}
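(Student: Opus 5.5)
Both parts rest on the following setup. Write $\Omega = C + p^n\Zp^d$. The quotient map $\pi:\Zp^d\to \Zp^d/p^n\Zp^d\cong G$ identifies $C$ with a subset of $G$. The dual of $\Qp^d$ is identified with $\Qp^d$ via the character $\chi(\xi\cdot x)$, where $\chi$ is the standard character trivial exactly on $\Zp$.

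\textbf{Tiling, sufficiency.} Suppose $C\oplus T_0=G$ with $T_0\subset G$. Lift $T_0$ to digit vectors and put
\[
T=T_0+\mathcal{L},\qquad \mathcal{L}=\text{a set of representatives of }\Qp^d/\Zp^d,
\]
for instance vectors whose coordinates lie in $\{\sum_{j<0} a_jp^j\}$. Then $\Omega+T_0=\Zp^d$ up to disjointness, and $\Zp^d+\mathcal{L}=\Qp^d$ is a partition.

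\textbf{Tiling, necessity.} Suppose $\Omega\oplus T=\Qp^d$. Since $\Omega$ is a union of $p^n\Zp^d$-cosets, we can assume $T$ is a union of such cosets modulo null sets. For each coset $y+\Zp^d$, look at the translates $\Omega+t$ that meet it. Because $\Omega\subset\Zp^d$ and $\Zp^d$ is a subgroup, $\Omega+t$ meets $y+\Zp^d$ only if $t\in y+\Zp^d$, and in that case $\Omega+t\subset y+\Zp^d$. So $T\cap(y+\Zp^d)$, reduced mod $p^n$ after subtracting $y$, gives a finite set $T_0$ with $C\oplus T_0=G$. Taking $y=0$ is enough.

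\textbf{Spectrality.} I will use that $L^2(p^n\Zp^d)$ has orthogonal basis $\{\chi(\xi\cdot x):\xi\in \mathcal{L}_n\}$, where $\mathcal{L}_n$ is a set of representatives of $\Qp^d/p^{-n}\Zp^d$. For $\xi\in\Qp^d$, compute
\[
\widehat{1_\Omega}(\xi)=\sum_{c\in C}\chi(c\cdot\xi)\,\widehat{1_{p^n\Zp^d}}(\xi),
\]
and $\widehat{1_{p^n\Zp^d}}$ is supported on $p^{-n}\Zp^d$. For $\xi\in p^{-n}\Zp^d$, writing $\xi=p^{-n}k$, the factor $\chi(c\cdot\xi)=e^{2\pi i\, c\cdot k/p^n}$ depends only on $k \bmod p^n$. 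So the zero set of $\widehat{1_\Omega}$ consists of all of $\Qp^d\setminus p^{-n}\Zp^d$ together with $\{p^{-n}k: k\bmod p^n\in Z_C\}$, where $Z_C$ is the zero set of $\widehat{1_C}$ on $\widehat G\cong G$.

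\emph{Sufficiency.} If $\Lambda_0\subset G$ is a spectrum for $C$, take $\Lambda=p^{-n}\Lambda_0+p^{-0}\mathcal{L}'$, with $\mathcal{L}'$ representatives of $\Qp^d/\Zp^d$ (any differences of points from distinct classes lie outside $\Zp^d$, not merely outside $p^{-n}\Zp^d$; I need to check that $\Lambda-\Lambda\setminus\{0\}$ lies in the zero set). More cleanly, $\Lambda=p^{-n}\Lambda_0+\mathcal{L}$ with $\mathcal{L}\subset \Qp^d$ representatives of $\Qp^d/p^{-n}\Zp^d$. Differences with different $\mathcal{L}$-parts fall outside $p^{-n}\Zp^d$, so the exponentials are orthogonal. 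Completeness follows from the density criterion: $\sum_{\lambda}|\widehat{1_\Omega}(\xi-\lambda)|^2=\mathfrak m(\Omega)^2$, which reduces fiberwise to the finite Parseval identity for $(C,\Lambda_0)$.

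\emph{Necessity.} This is the main obstacle. Given a spectrum $\Lambda$ for $\Omega$, the orthogonality set forces $\Lambda-\Lambda\subset (\Qp^d\setminus p^{-n}\Zp^d)\cup p^{-n}(Z_C\text{-lift})\cup\{0\}$. I would apply the criterion $\sum_{\lambda\in\Lambda}|\widehat{1_\Omega}(\xi-\lambda)|^2=\mathfrak m(\Omega)^2$ at $\xi$ chosen so that only the points of one coset $\Lambda\cap(\xi+p^{-n}\Zp^d)$ contribute. These reduce to a subset $\Lambda_0\subset G$ whose elements are pairwise orthogonal for $C$ and satisfy $\sum_{\lambda\in\Lambda_0}|\widehat{1_C}(k-\lambda)|^2=|C|^2$ for all $k$. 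That forces $|\Lambda_0|=|C|$, hence $\Lambda_0$ is a spectrum of $C$. The delicate point is to justify the pointwise use of the criterion; local constancy of $\widehat{1_\Omega}$ on $p^{-n}\Zp^d$-cosets handles this.
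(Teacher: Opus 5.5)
Your proposal is correct. The tiling part is the paper's argument: Lemma~\ref{lem:tiling-equiv} together with the observation before Corollary~\ref{corollary} that a translate of $\Omega$ either lies in or misses $\mathbb{Z}_p^d$. For sufficiency of spectrality you use the same spectrum $p^{-n}\Lambda_0+\mathbb{L}_n^d$ and the same fiberwise check. The only difference there is that the paper routes this through the atomic measure $\delta_C$ and Lemma~\ref{lem:spectral-equivalence}, while you go straight to the finite Parseval identity on $G$.

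The necessity direction is where you genuinely differ. The paper asserts that every spectrum of $\Omega$ has the global form $\Gamma=\Lambda_C+\mathbb{L}_n^d$. It then derives $S(z)=1$ only for $z\in p^{-n}\mathbb{Z}_p^d$, and needs a linear-independence-of-characters argument to extend $S\equiv 1$ to all of $\mathbb{Q}_p^d$ before applying Lemma~\ref{lem:spectral-equivalence}.

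You instead take one fiber $\Lambda\cap(\eta+p^{-n}\mathbb{Z}_p^d)$ of an arbitrary spectrum. As $\xi=\eta+p^{-n}k$ runs through that coset, $k \bmod p^n$ runs through all of $G$. So the reduced identity is already the complete finite criterion, and no extension step is needed. You also never use the product structure of $\Gamma$. The paper does not justify that structure, and it can fail. For $\Omega=\mathbb{Z}_p\subset\mathbb{Q}_p$ (written with $n=1$, $C=G$), every set of representatives of $\mathbb{Q}_p/\mathbb{Z}_p$ is a spectrum, and its fibers over different cosets of $p^{-1}\mathbb{Z}_p$ need not be translates of one another. What the paper's route buys in exchange is the explicit bridge to spectrality of the measure $\delta_C$ on $\mathbb{Q}_p^d$.

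A few details to tighten:
\begin{enumerate}
\item Injectivity of $\lambda\mapsto p^n(\lambda-\eta)\bmod p^n$ on a fiber comes from $\widehat{1_\Omega}=p^{-nd}|C|\neq 0$ on $\mathbb{Z}_p^d$.
\item The count $|\Lambda_0|=|C|$ follows by summing $\sum_{\mu\in\Lambda_0}|\widehat{1_C}(k-\mu)|^2=|C|^2$ over $k\in G$ and using $\sum_{k\in G}|\widehat{1_C}(k)|^2=|G|\,|C|$.
\item Your ``delicate point'' is not one. The criterion of Lemma~\ref{lem:spectral-criterion} holds for every $\xi$. In any case $\widehat{1_\Omega}$ is constant on cosets of $\mathbb{Z}_p^d$, not of $p^{-n}\mathbb{Z}_p^d$.
\item Drop the sentence making $T$ a union of $p^n\mathbb{Z}_p^d$-cosets. $T$ is countable; what is true is that an a.e.\ tiling by translates of $\Omega$ is automatically exact.
\item Drop the false start that uses representatives of $\mathbb{Q}_p^d/\mathbb{Z}_p^d$ in the spectrum.
\end{enumerate}
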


This reduction theorem has several important implications.

\textbf{Combinatorial interpretation.} It translates the continuous Fuglede's problem in \(\mathbb{Q}_p^d\)
into a purely combinatorial problem on finite abelian groups, where tools from finite Fourier analysis and
combinatorics become available.

\textbf{Counterexamples in \(\mathbb{Q}_p^d\).} Since there are some counterexamples of Fuglede's conjecture in
 finite abelian groups, Theorem~\ref{main} provides a powerful tool to construct  counterexamples
 in \(\mathbb{Q}_p^d\).

\textbf{Classification of spectral or tiling sets.} The theorem provides a complete characterization of compact open spectral sets and tiles in \(\mathbb{Q}_p^d\) in terms of their digit sets, enabling explicit constructions and classifications.

\textbf{Structural insight.} The equivalence reveals that the essential features determining whether a compact open set in \(\mathbb{Q}_p^d\) is spectral or tiles are captured entirely by the arithmetic-combinatorial properties of its digit set in the finite quotient group.


The proof of Theorem~\ref{main} employs several key techniques from \(p\)-adic analysis.

A crucial technical lemma (Lemma \ref{lem:spectral-equivalence}) establishes that \((C, \Lambda)\) is a spectral pair
in the finite group \((\mathbb{Z}/p^n\mathbb{Z})^d\) if and only if \((\delta_C, \frac{1}{p^n}\Lambda)\) is a spectral pair in \(\mathbb{Q}_p^d\), where \(\delta_C\) is the uniform probability measure on \(C\). This connection between finite group
spectrality and \(p\)-adic spectrality for atomic measures forms the bridge between the discrete and continuous settings.

This work contributes to a growing body of research on spectral sets in totally disconnected LCA groups. The reduction approach developed here may extend to other local fields and more general totally disconnected groups. Additionally, the explicit connection between \(p\)-adic spectral sets and finite combinatorial structures opens avenues for computational investigation and explicit classification of examples.

Beyond the Fuglede's conjecture itself, understanding spectral sets in non-Archimedean settings has potential applications in \(p\)-adic mathematical physics, wavelet theory on local fields, and number theory. The structural results obtained here may also inform the study of related problems such as the existence of Riesz bases of exponentials and the characterization of bounded spectral sets in general LCA groups.

By clarifying the precise relationship between continuous \(p\)-adic spectral and tiling problems and their finite combinatorial counterparts, this work provides both a concrete tool for classification and a conceptual framework for understanding spectrality in totally disconnected settings.

\textbf{Organization of the paper}.
Section~\ref{prelim}  provides necessary background on \(p\)-adic analysis, including the structure
of \(\mathbb{Q}_p\), quasi-lattices, Fourier analysis in \(\mathbb{Q}_p^d\), and the fundamental
criteria for spectral sets and tiles. Section~\ref{mainresult} contains the proofs of our main results. We first
establish the tiling equivalence, then prove the spectral equivalence,
and finally combine these to prove Theorem~\ref{main}.

\section{Preliminaries}\label{prelim}

This section provides a comprehensive foundation in the mathematical concepts and tools essential for
understanding and proving our main results. We begin with the structure of the \(p\)-adic number field,
proceed through harmonic analysis in \(\mathbb{Q}_p^d\), introduce the key notions of quasi-lattices
and uniform discreteness, establish criteria for spectral sets and tilings, and finally recall relevant
results from finite harmonic analysis that bridge the discrete and continuous settings.

\subsection{The $p$-adic number field $\Qp$}


Let \(p\) be a fixed prime number. For any nonzero rational number \(r \in \mathbb{Q}^\times\),
we can write uniquely
\[
r = p^{v_p(r)} \cdot \frac{a}{b},
\]
where \(a, b \in \mathbb{Z}\) are coprime to \(p\), and \(v_p(r) \in \mathbb{Z}\) is the \textbf{\(p\)-adic valuation}.
We extend \(v_p\) to all of \(\mathbb{Q}\) by setting \(v_p(0) = +\infty\).
The \textbf{\(p\)-adic absolute value} is then defined as
\[
|r|_p = p^{-v_p(r)} \quad (r \neq 0), \qquad |0|_p = 0.
\]
This absolute value satisfies the following properties for all \(r, s \in \mathbb{Q}\):

\begin{enumerate}
    \item \textbf{Positivity:} \(|r|_p \ge 0\), with equality iff \(r = 0\).
    \item \textbf{Multiplicativity:} \(|rs|_p = |r|_p |s|_p\).
    \item \textbf{Strong triangle inequality:} \(|r + s|_p \le \max\{|r|_p, |s|_p\}\).
\end{enumerate}

The third property makes \(|\cdot|_p\) a \textbf{non-Archimedean} absolute value. It implies that \(\mathbb{Q}\) becomes an ultrametric space. Where every triangle is isosceles, if \(|r|_p \neq |s|_p\), then \(|r + s|_p = \max\{|r|_p, |s|_p\}\).

The field of \textbf{\(p\)-adic numbers} \(\mathbb{Q}_p\) is defined as the completion of \(\mathbb{Q}\) with respect to \(|\cdot|_p\). Elements of \(\mathbb{Q}_p\) can be represented uniquely as formal Laurent series in \(p\):
\[
x = \sum_{n = v_p(x)}^{\infty} a_n p^n, \quad a_n \in \{0, 1, \dots, p-1\}, \quad a_{v_p(x)} \neq 0.
\]
The \textbf{fractional part} of \(x\) is defined as
\[
\{x\} = \sum_{n = v_p(x)}^{-1} a_n p^n \quad (\text{if } v_p(x) < 0), \qquad \{x\} = 0 \text{ if } v_p(x) \ge 0.
\]

The set \[\mathbb{Z}_p = \{x \in \mathbb{Q}_p : |x|_p \le 1\} = \{x \in \mathbb{Q}_p : v_p(x) \ge 0\}\] is the \textbf{ring of \(p\)-adic integers}, which is compact and open in \(\mathbb{Q}_p\). The maximal ideal of \(\mathbb{Z}_p\) is \[p\mathbb{Z}_p = \{x : |x|_p < 1\},\] and the residue field is \[\mathbb{Z}_p/p\mathbb{Z}_p \cong \mathbb{F}_p.\]


The topology of \(\mathbb{Q}_p\) is totally disconnected, the connected components are singletons.
A base for the topology is given by the balls
\[
B(a, p^{-n}) = \{x \in \mathbb{Q}_p : |x - a|_p \le p^{-n}\} = a + p^n \mathbb{Z}_p, \quad a \in \mathbb{Q}_p, \; n \in \mathbb{Z}.
\]
These balls are both open and compact (clopen). The Haar measure \(\mathfrak{m}\) in \(\mathbb{Q}_p\),
as an additive locally compact group,  is normalized so that \(\mathfrak{m}(\mathbb{Z}_p) = 1\).
Consequently, for any ball \(B(a, p^{-n})\), we have \(\mathfrak{m}(B(a, p^{-n})) = p^{-n}\).

%
%

For the \(d\)-dimensional vector space \(\mathbb{Q}_p^d\), we take the product topology and the product Haar measure, also denoted by \(\mathfrak{m}\) or \(dx\). We equip \(\mathbb{Q}_p^d\) with the maximum norm
\[
|x|_p = \max_{1 \le j \le d} |x_j|_p, \quad x = (x_1, \dots, x_d) \in \mathbb{Q}_p^d,
\]
and the scalar product
\[
x \cdot y = \sum_{j=1}^d x_j y_j \in \mathbb{Q}_p.
\]

\subsection{Characters and Fourier analysis in \(\mathbb{Q}_p^d\)}\label{subsec:fourier-analysis}


The standard additive character in \(\mathbb{Q}_p\) is defined by
\[
\chi(x) = e^{2\pi i \{x\}}, \quad x \in \mathbb{Q}_p.
\]
For each \(\xi \in \mathbb{Q}_p\), the character \(\chi_\xi\) is defined by \(\chi_\xi(x) = \chi(\xi x)\).
The mapping \(\xi \mapsto \chi_\xi\) gives an isomorphism between \(\mathbb{Q}_p\) and its dual group \(\widehat{\mathbb{Q}}_p\).
Thus we identify \(\widehat{\mathbb{Q}}_p\) with \(\mathbb{Q}_p\).

In higher dimensions, for \(x, y \in \mathbb{Q}_p^d\), we set
\[
\chi_y(x) = \chi(x \cdot y) = e^{2\pi i \{x \cdot y\}}.
\]
Then \(\widehat{\mathbb{Q}}_p^d = \{\chi_y : y \in \mathbb{Q}_p^d\} \cong \mathbb{Q}_p^d\).

Two key properties of these characters are frequently used.

\begin{enumerate}
    \item \textbf{Restriction to balls:} If \(x \in \frac{k}{p^n} + \mathbb{Z}_p\) with \(k, n \in \mathbb{Z}\), then
          \[
          \chi(x) = e^{2\pi i k / p^n}.
          \]
    \item \textbf{Orthogonality on subgroups:} For any \(n \ge 1\),
          \[
          \int_{p^{-n}\mathbb{Z}_p} \chi(x) \, dx = 0.
          \]
\end{enumerate}
The interested readers are referred to \cite{VVZ1994} for further information about characters of \(\mathbb{Q}_p^d\).


For \(f \in L^1(\mathbb{Q}_p^d)\), the Fourier transform is defined as
\[
\widehat{f}(\xi) = \int_{\mathbb{Q}_p^d} f(x) \overline{\chi_\xi(x)} \, dx = \int_{\mathbb{Q}_p^d} f(x)
\chi(-\xi \cdot x) \, dx, \quad \xi \in \mathbb{Q}_p^d.
\]
The inversion formula holds for sufficiently nice functions (e.g., \(f \in L^1 \cap L^2\)),
\[
f(x) = \int_{\mathbb{Q}_p^d} \widehat{f}(\xi) \chi_\xi(x) \, d\xi.
\]


A function \(f : \mathbb{Q}_p^d \to \mathbb{C}\) is called \textbf{locally constant}, if for every \(x\)
there exists an open neighborhood of \(x\) on which \(f\) is constant. If the radius of constancy can be chosen
uniformly independent of \(x\), then \(f\) is \textbf{uniformly locally constant}.

\begin{definition}\label{def:locally-constant}
A function \(f : \mathbb{Q}_p^d \to \mathbb{C}\) is uniformly locally constant if there exists \(n \in \mathbb{Z}\) such that
\[
f(x + y) = f(x) \quad \forall x \in \mathbb{Q}_p^d, \; \forall y \in B(0, p^n).
\]
Equivalently, \(f\) is constant on every coset of the compact open subgroup \(p^n\mathbb{Z}_p^d\).
\end{definition}

Uniformly locally constant functions are continuous, in fact, they are the analogue of ``smooth" functions
in the \(p\)-adic setting. The following proposition describes the duality between local constancy and
compact support under the Fourier transform.

\begin{proposition}[\cite{FFS}]\label{prop:duality}
Let \(f \in L^1(\mathbb{Q}_p^d)\).

\begin{enumerate}
    \item If \(f\) is uniformly locally constant, then \(\widehat{f}\) has compact support.
    \item If \(f\) has compact support, then \(\widehat{f}\) is uniformly locally constant.
\end{enumerate}
\end{proposition}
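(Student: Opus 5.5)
Both parts of Proposition~\ref{prop:duality} come from the same mechanism: translation invariance of \(f\) in \(x\) corresponds to support restrictions of \(\widehat f\) in \(\xi\), and conversely. Everything rests on the orthogonality relation for characters over balls. For \(m\in\mathbb{Z}\) we will use
\[
\int_{p^{m}\mathbb{Z}_p^d}\chi(\xi\cdot y)\,dy=
\begin{cases} p^{-md}, & \xi\in p^{-m}\mathbb{Z}_p^d,\\ 0, & \text{otherwise}.\end{cases}
\]
This follows coordinatewise. The integral factors as a product over \(j\), and each factor reduces to the orthogonality property listed in Subsection~\ref{subsec:fourier-analysis}. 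To see this, substitute \(y_j=\xi_j^{-1}u\) when \(|\xi_j|_p>p^{m}\). Otherwise \(\chi(\xi_j y_j)=1\) identically, by the restriction-to-balls property, since then \(\xi_j y_j\in\mathbb{Z}_p\).

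For part (2), suppose \(\operatorname{supp} f\subset B(0,p^{k})=p^{-k}\mathbb{Z}_p^d\) for some \(k\). For \(\eta\in p^{k}\mathbb{Z}_p^d\) and every \(x\) in the support we have \(\eta\cdot x\in\mathbb{Z}_p\), so \(\chi(-\eta\cdot x)=1\). Then
\[
\widehat f(\xi+\eta)=\int f(x)\chi(-\xi\cdot x)\chi(-\eta\cdot x)\,dx=\widehat f(\xi).
\]
Hence \(\widehat f\) is constant on cosets of \(p^{k}\mathbb{Z}_p^d\), as Definition~\ref{def:locally-constant} requires. This part is essentially immediate.

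For part (1), suppose \(f(x+y)=f(x)\) for all \(y\in p^{n}\mathbb{Z}_p^d\). Averaging over the compact subgroup and using Fubini (justified since \(f\in L^1\)) gives
\[
\widehat f(\xi)=p^{nd}\int_{p^n\mathbb{Z}_p^d}\int f(x+y)\chi(-\xi\cdot x)\,dx\,dy .
\]
After the change of variables \(x\mapsto x-y\) in the inner integral, this becomes
\[
\widehat f(\xi)=\widehat f(\xi)\cdot p^{nd}\int_{p^n\mathbb{Z}_p^d}\chi(\xi\cdot y)\,dy .
\]
By the displayed orthogonality relation the integral vanishes unless \(\xi\in p^{-n}\mathbb{Z}_p^d\). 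Therefore \(\widehat f\) vanishes off the compact ball \(p^{-n}\mathbb{Z}_p^d\), so it has compact support.

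The only step needing care is the orthogonality relation for arbitrary \(m\) and in \(d\) dimensions. The paper states it only for \(p^{-n}\mathbb{Z}_p\), and extending it requires the scaling of Haar measure, \(\mathfrak{m}(aE)=|a|_p\mathfrak{m}(E)\). I expect this to be the main, though minor, obstacle; the rest is bookkeeping with Fubini and translation invariance of Haar measure.
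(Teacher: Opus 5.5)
Your proof is correct. The paper does not prove this proposition itself; it quotes it from \cite{FFS}. Your argument is the standard one: your orthogonality relation is the annihilator computation $H^\perp=p^{-n}\mathbb{Z}_p^d$ carried out in the proof of Lemma~\ref{lem:fourier-ball}, and your averaging step in (1) amounts to writing $f=p^{nd}\, f*\mathbf{1}_{p^n\mathbb{Z}_p^d}$ and taking Fourier transforms.

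The obstacle you flag can be avoided. For each single $y\in p^n\mathbb{Z}_p^d$, translation invariance already gives $\widehat f(\xi)=\chi(\xi\cdot y)\,\widehat f(\xi)$. Now suppose some coordinate $\xi_j\notin p^{-n}\mathbb{Z}_p$, and let $e_j$ be the $j$-th standard basis vector. Then $y=p^n e_j$ gives $\chi(\xi\cdot y)=\chi(p^n\xi_j)\neq 1$, because $\chi(x)=1$ if and only if $\{x\}=0$, if and only if $x\in\mathbb{Z}_p$. Hence $\widehat f(\xi)=0$. So neither Fubini nor the scaling rule $\mathfrak{m}(aE)=|a|_p\,\mathfrak{m}(E)$ is needed, and the same trick proves your orthogonality relation directly.
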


This proposition is fundamental because it implies that the Fourier transform of a compactly supported function is not only continuous but in fact ``smooth" in the \(p\)-adic sense. Consequently, the Fourier transform of the indicator function of a compact open set is uniformly locally constant and, by the inversion formula, is determined by its values on a discrete set.

\subsection{Quasi-lattices and uniformly discrete sets}\label{subsec:quasi-lattices}


Unlike \(\mathbb{R}^d\), the group \(\mathbb{Q}_p^d\) contains no discrete cocompact subgroups (lattices) because it is totally disconnected. Instead, we use the concept of \textbf{quasi-lattices}.

Let \(\mathbb{L}^d \subset \mathbb{Q}_p^d\) be a complete set of coset representatives of \(\mathbb{Q}_p^d / \mathbb{Z}_p^d\), so that
\[
\mathbb{Q}_p^d = \bigsqcup_{\gamma \in \mathbb{L}^d} (\gamma + \mathbb{Z}_p^d)=\mathbb{L}^d\oplus\mathbb{Z}_p^d.
\]
This implies that \(\mathbb{L}^d\) is a tiling complement of \(\mathbb{Z}_p^d\) in \(\mathbb{Q}_p^d\).
For each integer $n$, let
\[
\mathbb{L}_n^d=p^{-n}\mathbb{L}^d.
\]
Notice that
\[
\mathbb{Q}_p^d =p^{-n}\mathbb{Z}_p^d\oplus p^{-n}\mathbb{L}^d=B(0, p^n)\oplus \mathbb{L}_n^d.
\]
So $\mathbb{L}_n^d$ is a tiling complement of $B(0, p^n)$.
We call \(\mathbb{L}^d\) a \textbf{standard quasi-lattice}. More generally, if \(M \in \operatorname{GL}_d(\mathbb{Q}_p)\) is
a non-singular \(d\times d\) matrix,
then the set \(M\mathbb{L}^d\) is called a quasi-lattice. Quasi-lattices play the role of lattices in
the \(p\)-adic setting for harmonic analysis.

\begin{lemma}[\cite{Taibleson1975}]\label{lem:standard-basis}
The set of characters \(\{\chi_\gamma\}_{\gamma \in \mathbb{L}^d}\) forms an orthonormal basis for \(L^2(\mathbb{Z}_p^d)\).
\end{lemma}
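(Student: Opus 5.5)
Since the statement factors over coordinates, I reduce to $d=1$. Characters satisfy $\chi_{\gamma}(x)=\prod_{j=1}^d \chi(\gamma_j x_j)$. Haar measure on $\mathbb{Z}_p^d$ is the product measure, and $\mathbb{L}^d$ may be taken as the product $\mathbb{L}^{(1)}\times\cdots\times\mathbb{L}^{(1)}$ of one-dimensional coset representatives. Products of orthonormal bases of $L^2(\mathbb{Z}_p)$ then form an orthonormal basis of $L^2(\mathbb{Z}_p^d)=L^2(\mathbb{Z}_p)^{\otimes d}$. This argument needs the representatives to be of product form, so I first check that this costs nothing. For a general complete set $\mathbb{L}^d$ of representatives of $\mathbb{Q}_p^d/\mathbb{Z}_p^d$, the character $\chi_\gamma$ restricted to $\mathbb{Z}_p^d$ depends only on the class $\gamma+\mathbb{Z}_p^d$, because $\chi(\gamma\cdot x)=\chi((\gamma+z)\cdot x)$ for $z\in\mathbb{Z}_p^d$ and $x\in\mathbb{Z}_p^d$ (here $z\cdot x\in\mathbb{Z}_p$ and $\chi$ is trivial on $\mathbb{Z}_p$). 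So the family $\{\chi_\gamma|_{\mathbb{Z}_p^d}\}$ is the same for every choice of representatives, and I may work with the canonical choice $\gamma=\{\gamma\}$ coordinatewise, which is a product.

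\emph{Orthonormality.} Take $\gamma\neq\gamma'$ in $\mathbb{L}^d$ and set $\eta=\gamma-\gamma'$. Then $\eta\notin\mathbb{Z}_p^d$, so some coordinate has $|\eta_j|_p=p^{m}$ with $m\ge1$. The inner product $\langle\chi_\gamma,\chi_{\gamma'}\rangle$ equals $\int_{\mathbb{Z}_p^d}\chi(\eta\cdot x)\,dx$, which factors as $\prod_j\int_{\mathbb{Z}_p}\chi(\eta_j x_j)\,dx_j$. The substitution $y=\eta_j x_j$ turns the $j$-th factor into $p^{-m}\int_{p^{-m}\mathbb{Z}_p}\chi(y)\,dy$, and this vanishes by the orthogonality property stated in Subsection~\ref{subsec:fourier-analysis}. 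Normalization is immediate, since $|\chi_\gamma|=1$ and $\mathfrak{m}(\mathbb{Z}_p^d)=1$.

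\emph{Completeness.} This is the main step. I use density of locally constant functions: indicator functions of cosets $a+p^n\mathbb{Z}_p^d$ with $n\ge0$ and $a\in\mathbb{Z}_p^d$ span a dense subspace of $L^2(\mathbb{Z}_p^d)$, because these cosets generate the Borel $\sigma$-algebra and form a base of clopen sets. It therefore suffices to show that each such indicator lies in the span of finitely many $\chi_\gamma$. Fix $n$ and consider the finite group $\mathbb{Z}_p^d/p^n\mathbb{Z}_p^d\cong(\mathbb{Z}/p^n\mathbb{Z})^d$. For $\gamma\in p^{-n}\mathbb{Z}^d$ with canonical fractional representatives $k/p^n$, $k\in\{0,\dots,p^n-1\}^d$, the restriction property gives $\chi_\gamma(x)=e^{2\pi i k\cdot x/p^n}$. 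This function is constant on cosets of $p^n\mathbb{Z}_p^d$, and these $p^{nd}$ functions are exactly the characters of $(\mathbb{Z}/p^n\mathbb{Z})^d$. By finite Fourier analysis they span all functions on that finite group, so in particular they span every indicator $1_{a+p^n\mathbb{Z}_p^d}$. These $\gamma$ are distinct classes mod $\mathbb{Z}_p^d$, so they belong to $\mathbb{L}^d$ up to the harmless change of representatives noted at the start.

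Combining orthonormality with density of the span yields an orthonormal basis.
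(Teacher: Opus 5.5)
Your proof is correct. The paper gives no argument of its own here; it quotes the lemma from Taibleson. The standard abstract route to it is Pontryagin duality for the compact group $\mathbb{Z}_p^d$. The annihilator of $\mathbb{Z}_p^d$ under $(x,\xi)\mapsto\chi(x\cdot\xi)$ is $\mathbb{Z}_p^d$ itself, so $\gamma\mapsto\chi_\gamma|_{\mathbb{Z}_p^d}$ identifies $\mathbb{Q}_p^d/\mathbb{Z}_p^d$ with the dual group. The full set of continuous characters of a compact abelian group is then an orthonormal basis of its $L^2$ space (Peter--Weyl). That route is short, but it needs an input you avoid entirely: that \emph{every} continuous character of $\mathbb{Z}_p^d$ is of the form $\chi_\gamma$, i.e.\ is trivial on some $p^n\mathbb{Z}_p^d$.

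Your route is elementary and self-contained. Orthogonality comes from the vanishing of $\int_{p^{-m}\mathbb{Z}_p}\chi$. Completeness comes from observing that the $p^{nd}$ characters $\chi_\gamma$ with $\gamma\in p^{-n}\mathbb{Z}_p^d$ are exactly the pullbacks of the characters of $(\mathbb{Z}/p^n\mathbb{Z})^d$. Their span is therefore the space of functions constant on cosets of $p^n\mathbb{Z}_p^d$, and letting $n\to\infty$ gives density. It rests on the same identification $\psi_k=\chi_{k/p^n}$ of finite-group characters with restricted $p$-adic characters that the paper itself uses to pass between finite and $p$-adic spectral pairs, so it fits the paper well.

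Two minor remarks. First, the announced reduction to $d=1$ via tensor products is never used, since both halves of your argument are carried out directly in $\mathbb{Z}_p^d$. You can drop it and keep only the (correct and needed) observation that $\chi_\gamma|_{\mathbb{Z}_p^d}$ depends only on $\gamma+\mathbb{Z}_p^d$.

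Second, the density of the span of coset indicators deserves one more sentence. The cosets $a+p^n\mathbb{Z}_p^d$ form a countable $\pi$-system (any two are nested or disjoint) that contains $\mathbb{Z}_p^d$ and generates the Borel $\sigma$-algebra. A Dynkin-class argument therefore puts every $1_E$, hence every simple function, in the closed span. Alternatively, locally constant functions are uniformly dense in $C(\mathbb{Z}_p^d)$, which is dense in $L^2$.
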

Consequently, for any \(a \in \mathbb{Q}_p^d\), the system \(\{\chi_\gamma\}_{\gamma \in \mathbb{L}^d}\) is an orthonormal basis for \(L^2(a + \mathbb{Z}_p^d)\), so \((a + \mathbb{Z}_p^d, \mathbb{L}^d)\) is a spectral pair.


A set \(\Lambda \subset \mathbb{Q}_p^d\) is called \textbf{uniformly discrete} if there exists \(\delta \in \mathbb{Z}\)
such that for any two distinct points \(\lambda, \lambda' \in \Lambda\),
\[
|\lambda - \lambda'|_p \ge p^\delta.
\]
The largest such \(\delta\) is called the \textbf{separation constant}, and denoted by \(\delta(\Lambda)\).

Every quasi-lattice is uniformly discrete. For the standard quasi-lattice \(\mathbb{L}^d\), we have \(\delta(\mathbb{L}^d) = p\)~\cite{Fan}.

\subsection{A spectral set  criterion}\label{subsec:spectral-criteria}


Let \(\Omega \subset \mathbb{Q}_p^d\) be a Borel set with \(0 < \mathfrak{m}(\Omega) < \infty\). A countable set \(\Lambda \subset \mathbb{Q}_p^d\) is called a \textbf{spectrum} for \(\Omega\) if the system of exponential functions
\[
E(\Lambda) = \big\{ \chi_\lambda(\cdot)|_\Omega : \lambda \in \Lambda \big\}
\]
forms an orthogonal basis for \(L^2(\Omega)\). In this case, we say that \(\Omega\) is a \textbf{spectral set} and that \((\Omega, \Lambda)\) is a \textbf{spectral pair}.


Given a discrete set \(\Lambda\), the orthogonality of \(E(\Lambda)\) in \(L^2(\Omega)\) is equivalent to a condition on the difference set \(\Lambda - \Lambda\) and the Fourier transform of the indicator function \(1_\Omega\).

\begin{lemma}\label{lem:orthogonality}
Let \(\Omega\) be as above and \(\Lambda \subset \mathbb{Q}_p^d\) be countable. Then \(E(\Lambda)\) is an orthogonal system in \(L^2(\Omega)\) if and only if
\[
\Lambda - \Lambda \subset \mathcal{Z}(\widehat{1_\Omega}) \cup \{0\},
\]
where \(\mathcal{Z}(g) = \{\xi : g(\xi) = 0\}\).
\end{lemma}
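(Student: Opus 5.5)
The plan is to expand the inner products directly. For $\lambda,\lambda'\in\Lambda$, the inner product of $\chi_\lambda$ and $\chi_{\lambda'}$ in $L^2(\Omega)$ is
\[
\langle \chi_\lambda, \chi_{\lambda'}\rangle_{L^2(\Omega)} = \int_\Omega \chi_\lambda(x)\overline{\chi_{\lambda'}(x)}\,dx .
\]
The steps, in order, are as follows.

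First, I would use that $\chi$ is a character of the additive group $\mathbb{Q}_p$ and that $x\cdot y$ is bilinear, so that $\chi_\lambda(x)\overline{\chi_{\lambda'}(x)} = \chi((\lambda-\lambda')\cdot x)$. Here $\overline{\chi(t)}=\chi(-t)$ because $|\chi|=1$.

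Second, I would compare the integral with the definition of the Fourier transform in Subsection~\ref{subsec:fourier-analysis}, which gives $\widehat{1_\Omega}(\xi)=\int_\Omega \chi(-\xi\cdot x)\,dx$. Hence
\[
\langle \chi_\lambda, \chi_{\lambda'}\rangle_{L^2(\Omega)} = \widehat{1_\Omega}(\lambda'-\lambda).
\]
Since $1_\Omega\in L^1$ (because $\mathfrak{m}(\Omega)<\infty$), this quantity is well defined.

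Third, I would conclude that $E(\Lambda)$ is orthogonal exactly when $\widehat{1_\Omega}(\lambda'-\lambda)=0$ for all distinct $\lambda,\lambda'\in\Lambda$. This says precisely that every nonzero element of $\Lambda-\Lambda$ lies in $\mathcal{Z}(\widehat{1_\Omega})$. The point $0$ must be excluded separately, since $\widehat{1_\Omega}(0)=\mathfrak{m}(\Omega)>0$; this is why $\{0\}$ appears in the union. Note also that $\Lambda-\Lambda$ is symmetric, so the sign of $\lambda'-\lambda$ does not matter.

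There is no real obstacle in this argument. The only points needing care are the sign convention in the Fourier transform and the observation that distinct $\lambda\ne\lambda'$ give distinct functions on $\Omega$. For the latter, if $\chi_\lambda=\chi_{\lambda'}$ on $\Omega$, then the inner product would equal $\mathfrak{m}(\Omega)\neq 0$, so the characterization stays consistent.
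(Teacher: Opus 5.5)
Your proposal is correct and follows the paper's proof, which identifies $\langle \chi_\lambda,\chi_{\lambda'}\rangle_{L^2(\Omega)}$ with a value of $\widehat{1_\Omega}$ at a difference of frequencies and requires it to vanish for $\lambda\neq\lambda'$. With the paper's convention $\widehat f(\xi)=\int f(x)\chi(-\xi\cdot x)\,dx$, your sign $\widehat{1_\Omega}(\lambda'-\lambda)$ is the accurate one (the paper writes $\widehat{1_\Omega}(\lambda-\lambda')$, its complex conjugate), but as you note this is immaterial since $\Lambda-\Lambda$ is symmetric.
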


\begin{proof}
For \(\lambda \neq \lambda'\), the inner product \(\langle \chi_\lambda, \chi_{\lambda'} \rangle_{L^2(\Omega)}\) equals \(\widehat{1_\Omega}(\lambda - \lambda')\). Orthogonality requires this to be zero.
\end{proof}


Completeness of an orthogonal system is more subtle. The following criterion, analogous to the characterizations in Euclidean spaces, is essential.

\begin{lemma}[\cite{Fan}]\label{lem:spectral-criterion}
Let \(\Omega \subset \mathbb{Q}_p^d\) be a Borel set with \(0 < \mathfrak{m}(\Omega) < \infty\) and
let \(\Lambda \subset \mathbb{Q}_p^d\) be countable. Then \((\Omega, \Lambda)\) is a spectral pair
if and only if
\begin{equation}\label{Spectralcriterion}
\sum_{\lambda \in \Lambda} |\widehat{\mathbf{1}_\Omega}(\xi-\lambda)|^2 = \mathfrak{m}(\Omega)^2 \quad \forall \xi \in \mathbb{Q}_p^d.
\end{equation}
\end{lemma}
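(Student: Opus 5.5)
The plan is to derive both directions from Parseval's identity and Bessel's inequality, applied to the normalized exponentials restricted to \(\Omega\). For \(\xi \in \mathbb{Q}_p^d\) set \(e_\xi = \chi_\xi|_\Omega \in L^2(\Omega)\). Each \(e_\xi\) satisfies \(\|e_\xi\|^2 = \mathfrak{m}(\Omega)\), since \(|\chi_\xi| = 1\). As in the proof of Lemma~\ref{lem:orthogonality}, a direct computation gives
\[
\langle e_\xi, e_\lambda\rangle = \int_\Omega \chi(\xi\cdot x)\overline{\chi(\lambda\cdot x)}\,dx = \widehat{\mathbf{1}_\Omega}(\lambda-\xi).
\]
The criterion involves \(\widehat{\mathbf{1}_\Omega}(\xi-\lambda)\) instead. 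Since \(\mathbf{1}_\Omega\) is real-valued, \(\widehat{\mathbf{1}_\Omega}(-\eta) = \overline{\widehat{\mathbf{1}_\Omega}(\eta)}\), so \(|\langle e_\xi, e_\lambda\rangle| = |\widehat{\mathbf{1}_\Omega}(\xi-\lambda)|\). The functions \(u_\lambda = \mathfrak{m}(\Omega)^{-1/2} e_\lambda\) are unit vectors.

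\emph{Necessity.} Suppose \((\Omega,\Lambda)\) is a spectral pair. Then \(\{u_\lambda\}_{\lambda\in\Lambda}\) is an orthonormal basis. Parseval's identity applied to \(e_\xi\) gives
\[
\mathfrak{m}(\Omega) = \|e_\xi\|^2 = \sum_{\lambda} |\langle e_\xi, u_\lambda\rangle|^2 = \mathfrak{m}(\Omega)^{-1}\sum_\lambda |\widehat{\mathbf{1}_\Omega}(\xi-\lambda)|^2,
\]
which is exactly \eqref{Spectralcriterion}.

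\emph{Sufficiency.} Assume \eqref{Spectralcriterion} holds for every \(\xi\).

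First I obtain orthogonality. Take \(\xi = \lambda_0 \in \Lambda\). The term with \(\lambda = \lambda_0\) already equals \(\widehat{\mathbf{1}_\Omega}(0)^2 = \mathfrak{m}(\Omega)^2\). All terms are nonnegative, so \(\widehat{\mathbf{1}_\Omega}(\lambda_0-\lambda) = 0\) for every \(\lambda \neq \lambda_0\). By Lemma~\ref{lem:orthogonality}, \(\{u_\lambda\}\) is an orthonormal system; in particular distinct points of \(\Lambda\) give distinct vectors.

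Next I obtain completeness. Let \(P\) be the orthogonal projection onto \(H = \overline{\operatorname{span}}\{u_\lambda\}\). Rewritten, \eqref{Spectralcriterion} says \(\|Pe_\xi\|^2 = \|e_\xi\|^2\). Hence \(\|e_\xi - Pe_\xi\|^2 = \|e_\xi\|^2 - \|Pe_\xi\|^2 = 0\), so every \(e_\xi\) lies in \(H\). It then suffices to show that \(\{e_\xi : \xi \in \mathbb{Q}_p^d\}\) spans a dense subspace of \(L^2(\Omega)\).

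This density step is the only place requiring more than Hilbert space bookkeeping, and I expect it to be the main obstacle. Suppose \(g \in L^2(\Omega)\) is orthogonal to all \(e_\xi\). Because \(\mathfrak{m}(\Omega) < \infty\), the function \(g\mathbf{1}_\Omega\) lies in \(L^1(\mathbb{Q}_p^d)\). Orthogonality gives
\[
\widehat{g\mathbf{1}_\Omega}(\xi) = \int_\Omega g(x)\overline{\chi_\xi(x)}\,dx = \langle g, e_\xi\rangle = 0 \quad \text{for all } \xi.
\]
By uniqueness of the Fourier transform on \(L^1\) of the LCA group \(\mathbb{Q}_p^d\), we get \(g = 0\) a.e. on \(\Omega\). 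If one prefers to stay within the inversion formula quoted in Section~\ref{subsec:fourier-analysis}, note that \(g\mathbf{1}_\Omega \in L^1\cap L^2\), so Plancherel yields \(\|g\|_2 = \|\widehat{g\mathbf{1}_\Omega}\|_2 = 0\) directly.

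Therefore \(H = L^2(\Omega)\), and \((\Omega,\Lambda)\) is a spectral pair.
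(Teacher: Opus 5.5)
Your proof is correct. The paper gives no proof of its own: it cites \cite{Fan} and only remarks that the identity yields orthogonality by evaluation at a point and completeness by a density argument. That is exactly what you carry out, via Parseval for necessity and, for sufficiency, evaluation at $\xi=\lambda_0$, the projection identity $\|Pe_\xi\|=\|e_\xi\|$, and Fourier uniqueness on $L^1(\mathbb{Q}_p^d)$. Your choice to evaluate at $\xi=\lambda_0\in\Lambda$ is the right form of the paper's ``evaluate at $\xi=0$'' remark, which only works directly when $0\in\Lambda$.
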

The identity (\ref{Spectralcriterion}) is often referred  as a \textbf{spectral set criterion}. It implies both orthogonality (by evaluating at \(\xi = 0\)) and completeness (by density arguments).

Uniform discreteness is a necessary condition for a set to be a spectrum.

\begin{lemma}\label{lem:uniform-discrete}
Let \(\Omega \subset \mathbb{Q}_p^d\) be a Borel set with \(0 < \mathfrak{m}(\Omega) < \infty\).
If \((\Omega, \Lambda)\) is a spectral pair, then \(\Lambda\) is uniformly discrete.
\end{lemma}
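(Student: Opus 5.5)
We will use Proposition~\ref{prop:duality} together with Lemma~\ref{lem:spectral-criterion}. Since \(\Omega\) has compact closure of positive measure, we may pick \(N\in\mathbb{Z}\) with \(\Omega \subset B(0,p^{N})\).

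The first step is to show that \(\widehat{1_\Omega}\) is uniformly locally constant and to relate the radius of constancy to \(N\). For \(\xi\in\mathbb{Q}_p^d\) and \(\eta\) with \(|\eta|_p\le p^{-N}\), every \(x\in\Omega\) satisfies \(|x\cdot\eta|_p\le 1\). Hence \(\chi(-x\cdot\eta)=1\), and therefore
\[
\widehat{1_\Omega}(\xi+\eta)=\int_\Omega \chi(-x\cdot\xi)\chi(-x\cdot\eta)\,dx=\widehat{1_\Omega}(\xi).
\]
So \(\widehat{1_\Omega}\) is constant on every coset \(\xi+B(0,p^{-N})\). This is Proposition~\ref{prop:duality}(2) made quantitative. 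The function is also nonzero at the origin, where \(\widehat{1_\Omega}(0)=\mathfrak{m}(\Omega)>0\), and it is continuous.

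Next, evaluate the spectral criterion \eqref{Spectralcriterion} at \(\xi=\lambda_0\in\Lambda\). For every \(\lambda\in\Lambda\) with \(|\lambda-\lambda_0|_p\le p^{-N}\), the first step gives \(|\widehat{1_\Omega}(\lambda_0-\lambda)|^2=|\widehat{1_\Omega}(0)|^2=\mathfrak{m}(\Omega)^2\). The criterion bounds the whole sum by \(\mathfrak{m}(\Omega)^2\), and the term \(\lambda=\lambda_0\) alone already contributes \(\mathfrak{m}(\Omega)^2\). So \(\lambda_0\) is the only element of \(\Lambda\) in \(\lambda_0+B(0,p^{-N})\).

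An alternative is orthogonality via Lemma~\ref{lem:orthogonality}: for distinct \(\lambda,\lambda'\) we need \(\widehat{1_\Omega}(\lambda-\lambda')=0\), whereas \(\widehat{1_\Omega}\equiv \mathfrak{m}(\Omega)\neq 0\) on \(B(0,p^{-N})\). Either way, distinct points of \(\Lambda\) satisfy \(|\lambda-\lambda'|_p>p^{-N}\), which means \(|\lambda-\lambda'|_p\ge p^{-N+1}\) because the absolute value takes values in \(p^{\mathbb{Z}}\). Thus \(\Lambda\) is uniformly discrete with \(\delta(\Lambda)\ge 1-N\).

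The only point that needs care is the case of a general Borel \(\Omega\) of finite measure that is not bounded. In that case the uniform local constancy argument fails, and orthogonality alone need not force separation. Here the intended context is \(\Omega\) compact open, so boundedness holds. If the general statement is intended, I would instead use continuity of \(\widehat{1_\Omega}\) at \(0\), which holds since \(1_\Omega\in L^1\). Continuity gives a ball \(B(0,p^{-N})\) on which \(|\widehat{1_\Omega}|>\mathfrak{m}(\Omega)/2>0\), and orthogonality then gives the same separation conclusion. This continuity argument in fact handles all cases uniformly, so it is the one I would write.
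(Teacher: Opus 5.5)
Your proposal is correct. The argument you say you would actually write is exactly the paper's proof: $\widehat{1_\Omega}$ is continuous at $0$ (automatic since $1_\Omega\in L^1$), and orthogonality gives $\widehat{1_\Omega}(\lambda-\lambda')=0$ for distinct points, as in Lemma~\ref{lem:orthogonality}.

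Your bounded-case route is also valid: you use quantitative local constancy on cosets of $B(0,p^{-N})$ and evaluate the spectral criterion at $\xi=\lambda_0$, which gives the explicit bound $\delta(\Lambda)\ge 1-N$. However, it needs $\Omega\subset B(0,p^{N})$, and your opening sentence asserts this without justification. Also, your aside that in the unbounded case ``orthogonality alone need not force separation'' is inaccurate. Orthogonality together with the automatic continuity of $\widehat{1_\Omega}$ always forces separation, as your own final argument shows.
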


\begin{proof}
By definition of a spectral pair, the set \(E(\Lambda)\)
is an orthonormal basis of \(L^2(\Omega)\).
For distinct \(\lambda, \lambda' \in \Lambda\), orthonormality gives
\[
\int_{\Qp^d} 1_{\Omega}(x) \overline{\chi(\lambda \cdot x)} \chi(\lambda' \cdot x) dx = 0.
\]
Rewriting the integrand using the character property \[\overline{\chi(a)} \chi(b) = \chi(b - a),\] this simplifies to
\[
\widehat{1_{\Omega}}(\lambda - \lambda') = 0.
\]

Since \(\widehat{1_{\Omega}}(0) = \mathfrak{m}(\Omega) > 0\) and \(\widehat{1_{\Omega}}\) is continuous, there exists an integer \(n_0 \in \mathbb{Z}\) such that \(\widehat{1_{\Omega}}(\xi)\neq 0\) for all \(\xi \in B(0,\ p^{n_0})\).

For distinct \(\lambda, \lambda' \in \Lambda\), \(\widehat{1_{\Omega}}(\lambda - \lambda') = 0\) implies \(\lambda - \lambda' \notin B(0,\ p^{n_0})\). Thus
\[
\left\vert {\lambda - \lambda'}\right\vert_p > p^{n_0} \implies \inf_{\lambda \neq \lambda' \in \Lambda}
\left\vert{\lambda - \lambda'}\right\vert_p  \geq p^{n_0} > 0.
\]
Hence, \(\Lambda\) is uniformly discrete.
\end{proof}


More generally, one can consider spectral pairs for probability measures. A Borel probability measure \(\mu\) in \(\mathbb{Q}_p^d\) is called \textbf{spectral} if there exists a countable set \(\Gamma \subset \mathbb{Q}_p^d\) such that \(E(\Gamma)\) is an orthonormal basis for \(L^2(\mu)\). The criterion (\ref{Spectralcriterion}) adapts naturally, \(\mu\) is spectral with spectrum \(\Gamma\) if and only if
\[
\sum_{\gamma \in \Gamma} |\widehat{\mu}(\gamma - \xi)|^2 = 1 \quad \forall \xi \in \mathbb{Q}_p^d.
\]

\subsection{Tiling and density of tiling sets}\label{subsec:tiling}


 A set \(\Omega \subset \Qp^d\) of positive finite measure is said to \textbf{tile} \(\Qp^d\)
 by translation if there exists a countable set \(T \subset \Qp^d\) such that
\begin{equation}\label{tilings}
\sum_{t \in T} 1_\Omega(x-t) = 1 \quad \text{for } \mathfrak{m}\text{-a.e. } x \in \mathbb{Q}_p^d.
\end{equation}
Equivalently, the family \(\{\Omega + t : t \in T\}\) partitions \(\mathbb{Q}_p^d\) up to null sets. We then write \(\Omega \oplus T = \mathbb{Q}_p^d\) and call \(T\) a \textbf{tiling set} for \(\Omega\).

Equation (\ref{tilings}) can be written as a convolution equation
\[
\mu_T * \mathbf{1}_\Omega = 1, \quad \text{where } \mu_T = \sum_{t \in T} \delta_t.
\]

Actually, a tiling set is also uniformly discrete.

\begin{lemma}\label{lem:uniform-discrete1}
Let \(\Omega \subset \mathbb{Q}_p^d\) be a Borel set with \(0 < \mathfrak{m}(\Omega) < \infty\).
If \((\Omega, T)\) is a tiling pair, then \(T\) is uniformly discrete.
\end{lemma}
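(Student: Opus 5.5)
The idea is to show that two distinct points of $T$ cannot be too close: if they were, the two translates of $\Omega$ would overlap on a set of positive measure. This is the tiling analogue of Lemma \ref{lem:uniform-discrete}, with $\widehat{1_\Omega}$ replaced by the autocorrelation function
\[
g(y)=\mathfrak{m}\big(\Omega\cap(\Omega+y)\big)=\int_{\mathbb{Q}_p^d} 1_\Omega(x)\,1_\Omega(x-y)\,dx=(1_\Omega * 1_{-\Omega})(y).
\]

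First, suppose $\{\Omega+t\}_{t\in T}$ partitions $\mathbb{Q}_p^d$ up to null sets, that is, (\ref{tilings}) holds almost everywhere. Take distinct $t,t'\in T$. Then $1_{\Omega+t}+1_{\Omega+t'}\le \sum_{s\in T}1_{\Omega+s}=1$ almost everywhere. Hence $(\Omega+t)\cap(\Omega+t')$ is null, and translating by $-t'$ gives $g(t-t')=0$.

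Second, I show $g$ is continuous with $g(0)=\mathfrak{m}(\Omega)>0$. Since $1_\Omega\in L^2$, we have
\[
|g(y)-g(0)|=\big|\langle 1_\Omega, 1_{\Omega+y}-1_\Omega\rangle\big|\le \|1_\Omega\|_2\,\|\tau_y 1_\Omega-1_\Omega\|_2,
\]
where $\tau_y$ is translation by $y$. The right-hand side tends to $0$ as $y\to0$, because translation is continuous in $L^2$ on an LCA group (approximate $1_\Omega$ by a locally constant compactly supported function). So there is $n_0\in\mathbb{Z}$ with $g(y)>0$ for all $y\in B(0,p^{n_0})$.

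Combining the two steps, distinct $t,t'\in T$ satisfy $t-t'\notin B(0,p^{n_0})$, so $|t-t'|_p>p^{n_0}$. Since the absolute value takes values in $p^{\mathbb{Z}}$, this gives $|t-t'|_p\ge p^{n_0+1}$, and $T$ is uniformly discrete.

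The main obstacle is continuity of $g$ at $0$ for a general Borel $\Omega$, which requires continuity of translation in $L^2$. If $\Omega$ were compact open, this would be immediate: $\Omega$ is then a finite union of cosets of some $p^{m}\mathbb{Z}_p^d$, so $\Omega+y=\Omega$ for $|y|_p\le p^{-m}$. The first step also needs care: the tiling condition holds only almost everywhere, and the countability of $T$ is what makes the union of exceptional sets null.
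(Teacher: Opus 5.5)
Your proof is correct and follows the paper's route: both use the autocorrelation $\mathfrak{m}(\Omega\cap(\Omega+y)) = (1_\Omega * 1_{-\Omega})(y)$, which vanishes at differences of distinct tiling points and is positive on a ball $B(0,p^{n_0})$ by continuity at $0$. You also justify two details the paper leaves implicit: the continuity at $0$, via $L^2$-continuity of translation, and the passage from $|t-t'|_p>p^{n_0}$ to $|t-t'|_p\ge p^{n_0+1}$, which is exactly what the paper's definition of uniform discreteness asks for.
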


\begin{proof}
By definition of a tiling pair, we have
\[\sum_{t \in T} 1_{\Omega}(x - t) = 1\ \text{for} \ \mathfrak{m}\text{-a.e. } x \in \Qp^d.\]
For distinct \(t, t' \in T\), the translates \(\Omega + t\) and \(\Omega + t'\)
are disjoint modulo \(\mathfrak{m}\), so
\[
\mathfrak{m}((\Omega + t) \cap (\Omega + t')) = 0.
\]
By translation invariance of the Haar measure, this is equivalent to
\[
\mathfrak{m}(\Omega \cap (\Omega + (t'-t))) = 0.
\]

Define the convolution \[f(x) = 1_{\Omega} * 1_{-\Omega}(x) = \int_{\Qp^d} 1_{\Omega}(y) 1_{\Omega}(y + x) d
\mathfrak{m}(y) = \mathfrak{m}(\Omega \cap (\Omega + x)).\]
Note that \(f(0) = \mathfrak{m}(\Omega) > 0\), and \(f\) is continuous on \(\Qp^d\).
Thus, there exists an integer \(n_0 \in \mathbb{Z}\) such that \(f(x) > 0\) for all \(x \in B(0,\ p^{n_0})\).

For distinct \(t, t' \in T\), \(\mathfrak{m}(\Omega \cap (\Omega + (t'-t))) = 0\)
implies \(f(t' - t) = 0\), so \(t' - t \notin B(0,\ p^{n_0})\). Thus
\[
|{t - t'}|_p > p^{n_0} \implies \inf_{t \neq t' \in T} |{t - t'}|_p \geq p^{n_0} > 0.
\]
Hence, \(T\) is uniformly discrete.
\end{proof}


If \(T\) is uniformly discrete, then for any compact set \(K\),
\[
\#(T \cap K) < \infty,
\]
so \(\mu_T\) is a Radon measure. The \textbf{bounded density} of \(T\) is defined, when it exists, as
\[
D(T) = \lim_{k \to \infty} \frac{\#(T \cap B(x_0, p^k))}{\mathfrak{m}(B(x_0, p^k))},
\]
where \(x_0\) is any base point. Due to translation invariance of the Haar measure, if the limit exists for one \(x_0\),
it exists for all and is independent of \(x_0\).

\begin{lemma}\label{lem:density}
Let \(\Omega \subset \mathbb{Q}_p^d\) be a bounded Borel set with positive Haar measure \(\mathfrak{m}(\Omega)>0\).
If \((\Omega, T)\) is a tiling pair, then
\[
D(T) = \frac{1}{\mathfrak{m}(\Omega)}.
\]
\end{lemma}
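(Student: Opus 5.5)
We prove Lemma~\ref{lem:density} by counting the translates of \(\Omega\) inside a large ball and comparing measures. Fix \(x_0\) and write \(B_k = B(x_0, p^k)\). Since \(\Omega\) is bounded, there is an integer \(r\) with \(\Omega \subset B(0, p^r)\). By Lemma~\ref{lem:uniform-discrete1}, \(T\) is uniformly discrete, so \(N_k := \#(T \cap B_k)\) is finite for every \(k\).

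We first take \(k \ge r\) and use the ultrametric property, which gives a clean two-sided comparison. For \(t \in B_k\) and \(\omega \in \Omega\), the strong triangle inequality gives
\[
|t+\omega - x_0|_p \le \max\{|t-x_0|_p, |\omega|_p\} \le p^k,
\]
so \(\Omega + t \subset B_k\). Conversely, suppose \(t \notin B_k\), that is, \(|t - x_0|_p > p^k \ge p^r \ge |\omega|_p\). The isosceles property then gives \(|t + \omega - x_0|_p = |t-x_0|_p > p^k\), so \(\Omega + t\) is disjoint from \(B_k\). Consequently
\[
B_k = \bigcup_{t \in T \cap B_k} (\Omega + t) \quad \text{up to null sets.}
\]
Only finitely many translates appear here, and they are pairwise disjoint modulo \(\mathfrak{m}\) by the tiling identity \eqref{tilings}.

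We then compute measures. Integrate \eqref{tilings} over \(B_k\) and use Fubini/Tonelli to exchange the sum and the integral, which is legitimate because all terms are nonnegative. Together with the two inclusions above, this yields
\[
\mathfrak{m}(B_k) = \sum_{t\in T}\mathfrak{m}\big((\Omega+t)\cap B_k\big) = N_k\,\mathfrak{m}(\Omega).
\]
Hence \(N_k/\mathfrak{m}(B_k) = 1/\mathfrak{m}(\Omega)\) exactly for all \(k \ge r\). The limit defining \(D(T)\) therefore exists and equals \(1/\mathfrak{m}(\Omega)\), for every choice of \(x_0\).

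The main point requiring care is the second inclusion: a translate \(\Omega+t\) with \(t\) outside \(B_k\) must miss \(B_k\) entirely, rather than straddle its boundary as in \(\mathbb{R}^d\). This is exactly where the non-Archimedean isosceles property is used. It is also why no boundary error term appears and the ratio is eventually constant, not merely convergent. The remaining ingredient, the finiteness of \(N_k\), is supplied by Lemma~\ref{lem:uniform-discrete1}.
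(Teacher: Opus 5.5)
Your proof is correct and takes essentially the same route as the paper: integrate the tiling identity over $B(x_0,p^k)$ using Tonelli, then use the ultrametric inequality for $k\ge r$ to see that $\Omega+t$ lies inside the ball when $t\in B(x_0,p^k)$ and misses it otherwise, which gives $\#(T\cap B(x_0,p^k))\,\mathfrak{m}(\Omega)=\mathfrak{m}(B(x_0,p^k))$ exactly. Because you treat an arbitrary base point $x_0$ directly, you do not need the paper's closing translation-invariance paragraph.
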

\begin{proof}
Let $x_0 \in \Qp^d$ be a base point. For $n \in \N$, define $K_n = x_0 + p^{-n}\Zp^d$. These sets satisfy
$K_n \subset K_{n+1}$, $\bigcup_{n \in \N} K_n = \Qp^d$ and  $\mathfrak{m}({K_n}) = p^{nd}$.
From the tiling definition,
\[
\sum_{t \in T} 1_\Omega(x-t) = 1, \quad \mathfrak{m}{\text{-a.e.}}
\]
Integrate both sides over $K_n$, by Tonelli's theorem for non-negative sums,
\[
\int_{K_n} \sum_{t \in T} 1_\Omega(x - t) dx = \int_{K_n} 1 dx = \mathfrak{m}({K_n}) = p^{nd}.
\]
Interchange sum and integral,
\begin{equation}\label{Tonelli}
\sum_{t \in T} \int_{K_n} 1_\Omega(x - t) dx = p^{nd}.
\end{equation}

By translation invariance of Haar measure, substitute $y = x - t$,
\[
\int_{K_n} 1_\Omega(x-t) dx = \int_{K_n + t} 1_\Omega(y) dy = \mathfrak{m}({(\Omega + t) \cap K_n}).
\]
Substitute back into (\ref{Tonelli}),
\begin{equation}\label{sum}
\sum_{t \in T} \mathfrak{m}((\Omega + t) \cap K_n) = p^{nd}.
\end{equation}
Let $N_n = \#\{t \in T : (\Omega + t) \cap K_n \neq \emptyset\}$. By tiling disjointness, the sets
$\{(\Omega + t) \cap K_n\}_{t \in T}$ are disjoint a.e., and
$\bigcup_{t\in T}((\Omega + t) \cap K_n) =K_n$.

Since $\Omega$ is bounded, $\Omega \subset B(0, p^M)$ for some $M \in \N$. For $n \geq M$,
\[
\mathfrak{m}((\Omega + t) \cap K_n)=\mathfrak{m}(\Omega + t) =
\mathfrak{m}(\Omega) \quad (\forall t \in T \text{ with } (\Omega+t) \cap K_n \neq \emptyset).
\]
Thus (\ref{sum}) simplifies to
\[
N_n \cdot \mathfrak{m}(\Omega) = p^{nd} \implies N_n = \frac{p^{nd}}{\mathfrak{m}(\Omega)}.
\]

For $n \geq M$,
If $t \in (T \cap K_n)$, then $\Omega + t \subset K_n$,
so \[t \in \{t \in T : (\Omega + t) \cap K_n \neq \emptyset\}.\]
If \(t \in \{t \in T : (\Omega + t) \cap K_n \neq \emptyset\},\) then $t = x - y$ for
$x \in K_n$ and $y \in \Omega$, so $\abs{t} \leq \max(p^n, p^M) = p^n$, hence $t \in T \cap K_n$.
Thus $N_n = \#(T \cap K_n)$ for $n \geq M$.
Substitute $N_n = \#(T \cap K_n)$ into above, we have
\[
\#(T \cap K_n) = \frac{p^{nd}}{\mathfrak{m}(\Omega)}.
\]
The density is
\[
D(T) = \lim_{n \to \infty} \frac{\#(T \cap K_n)}{\mathfrak{m}(K_n)} = \lim_{n \to \infty}
\frac{p^{nd}/\mathfrak{m}(\Omega)}{p^{nd}} = \frac{1}{\mathfrak{m}(\Omega)}.
\]

By translation invariance of Haar measure, replacing $x_0$ with any $x_1 \in \Qp^d$ gives a translate
$K_n' = x_1 + p^{-n}\Zp^d$. The count $\#(T \cap K_n')=\#((T - (x_1 - x_0)) \cap K_n)$,
and since $T - (x_1 - x_0)$ is also a tiling set for $\Omega$, its density is identical.
Thus the density is independent of $x_0$.
\end{proof}

%
%

\subsection{Finite harmonic analysis and spectral pairs in finite abelian groups}\label{subsec:finite-harmonic}


Let \(G\) be a finite abelian group. Its dual group \(\widehat{G}\) consists of all homomorphisms \(\psi : G \to \mathbb{C}^\times\).
For \(G = (\mathbb{Z}/p^n\mathbb{Z})^d\), the characters are given by
\[
\psi_\lambda(c) = e^{\frac{2\pi i}{p^n} \lambda \cdot c}, \quad \lambda, c \in G,
\]
where \(\lambda \cdot c = \sum_{j=1}^d \lambda_j c_j \bmod p^n\).

The space \(\ell^2(G)\) of complex-valued functions on \(G\) has inner product
\[
\langle f, g \rangle = \frac{1}{|G|} \sum_{c \in G} f(c) \overline{g(c)}.
\]
The characters \(\{\psi_\lambda\}_{\lambda \in G}\) form an orthonormal basis of \(\ell^2(G)\).

For a subset \(C \subset G\), we denote by \(\mathbf{1}_C\) its indicator function and by \(\delta_C\)
the uniform probability measure on \(C\)
\[
\delta_C = \frac{1}{|C|} \sum_{c \in C} \delta_c.
\]
Its Fourier transform on the finite group is
\[
\widehat{\delta_C}(\lambda) = \frac{1}{|C|} \sum_{c \in C} \overline{\psi_\lambda(c)} = \frac{1}{|C|} \sum_{c \in C} e^{-\frac{2\pi i}{p^n} \lambda \cdot c}.
\]


A subset \(C \subset G\) is called \textbf{spectral} if there exists \(\Lambda \subset G\) such that the characters \(\{\psi_\lambda|_C : \lambda \in \Lambda\}\) form an orthogonal basis of \(\ell^2(C)\). Equivalently, the matrix
\[
\left( \frac{1}{\sqrt{|C|}} e^{\frac{2\pi i}{p^n} \lambda \cdot c} \right)_{\lambda \in \Lambda, c \in C}
\]
is unitary. In this case, \((C, \Lambda)\) is called a \textbf{spectral pair} in \(G\).

The Fuglede's conjecture for finite abelian groups asks whether every spectral set in \(G\) also tiles \(G\) by translation, and vice versa. This conjecture has been confirmed for certain groups, including \(\mathbb{Z}/p^n\mathbb{Z}\) \cite{FFS} and \((\mathbb{Z}/p\mathbb{Z})^2\)~\cite{Iosevich-Mayeli-Pakianathan2015}, and is known to hold for all finite
abelian groups of order \(p^n q^m\) for $p<q$ and $m\le9$ or $n\le6$; $p^{m-2}<q^{4}$ \cite{M21,MK17}.


A key insight is that spectral pairs in the finite group \(G = (\mathbb{Z}/p^n\mathbb{Z})^d\) correspond to spectral pairs for certain atomic measures in \(\mathbb{Q}_p^d\). Specifically, if we embed \(G\) into \(\mathbb{Q}_p^d\) as a set of representatives in \(\{0,1,\dots,p^n-1\}^d\), then for \(\lambda, c \in G\), we have
\[
\psi_\lambda(c) = e^{\frac{2\pi i}{p^n} \lambda \cdot c} = e^{2\pi i \{\frac{\lambda}{p^n} \cdot c\}} = \chi\left( \frac{\lambda}{p^n} \cdot c \right) = \chi_{\lambda/p^n}(c).
\]
Thus, the characters of the finite group correspond to restrictions of \(p\)-adic characters to the embedded set \(C\).

\subsection{Technical lemmas}\label{subsec:tech-lemmas}

We introduce here a few more technical results that will be used in the proofs.

\begin{lemma}[Partition of unity]\label{lem:partition}
Let \(\Omega = \bigsqcup_{c \in C} (c + p^n\mathbb{Z}_p^d)\). Then for any \(x \in \mathbb{Q}_p^d\),
\[
\sum_{c \in C} \mathbf{1}_{c + p^n\mathbb{Z}_p^d}(x) = \mathbf{1}_\Omega(x).
\]
\end{lemma}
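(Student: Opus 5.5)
The identity is a pointwise statement about indicator functions, so I will fix an arbitrary \(x \in \mathbb{Q}_p^d\) and compare both sides as numbers in \(\{0,1,2,\dots\}\). The key point is that the union defining \(\Omega\) is disjoint. The elements \(c \in C\) are taken as representatives in \(\{0,1,\dots,p^n-1\}^d\) of distinct classes of \((\mathbb{Z}/p^n\mathbb{Z})^d\). Hence for \(c \neq c'\) in \(C\) the difference \(c - c'\) is a nonzero integer vector. Some coordinate of it has absolute value less than \(p^n\), so that coordinate has \(p\)-adic valuation strictly less than \(n\). This gives \(|c-c'|_p > p^{-n}\), that is, \(c - c' \notin p^n\mathbb{Z}_p^d\). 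Since \(p^n\mathbb{Z}_p^d\) is a subgroup, the cosets \(c + p^n\mathbb{Z}_p^d\) and \(c' + p^n\mathbb{Z}_p^d\) are then disjoint: a common point \(y\) would force \(c - c' = (y - c') - (y - c) \in p^n\mathbb{Z}_p^d\).

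With disjointness in hand, I will split into two cases. If \(x \notin \Omega\), then \(x\) lies in none of the balls \(c + p^n\mathbb{Z}_p^d\), so every term of the sum vanishes and both sides equal \(0\). If \(x \in \Omega\), then by definition of \(\Omega\) as the union, \(x\) lies in at least one ball. By the disjointness above it lies in exactly one, so exactly one term of the sum equals \(1\) and the rest vanish. The sum is therefore \(1 = \mathbf{1}_\Omega(x)\).

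The only step needing care is disjointness, which is where the hypothesis that \(C\) is a subset of \((\mathbb{Z}/p^n\mathbb{Z})^d\) enters; distinct residues are essential. If distinct-residue representatives were not assumed, the sum could count a ball more than once, and the identity would fail. Everything else is immediate from the definitions. The sum is finite because \(C\) is finite, so no convergence issues arise, and the identity holds for every \(x\), not merely almost everywhere.
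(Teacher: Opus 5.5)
Your proof is correct and follows the paper's argument: the same case split on whether $x\in\Omega$, with exactly one indicator equal to $1$ when $x\in\Omega$ and all equal to $0$ otherwise. The only difference is that the paper takes disjointness of the cosets directly from the $\bigsqcup$ in the hypothesis, whereas you derive it from the elements of $C$ being distinct residues mod $p^n$. In that step, say that a \emph{nonzero} coordinate of $c-c'$, which exists and has absolute value $<p^n$, has valuation $<n$.
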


\begin{proof}
By definition, $\Omega$ is the disjoint union of the cosets $c + p^n \mathbb{Z}_p^d$ over $c \in C$.
If $x \in \Omega$, then there exists a unique $c_0 \in C$ such that $x \in c_0 + p^n \mathbb{Z}_p^d$. Hence,
    \[
    \mathbf{1}_{c_0 + p^n \mathbb{Z}_p^d}(x) = 1,
    \]
and for all $c \neq c_0$, $\mathbf{1}_{c + p^n \mathbb{Z}_p^d}(x) = 0$. Therefore, the sum equals $1$.
If $x \notin \Omega$, then $x$ does not belong to any of these cosets, so every term in the sum is $0$.
Thus, in both cases, the sum equals $\mathbf{1}_{\Omega}(x)$.
\end{proof}

\begin{lemma}[Fourier transform of a ball]\label{lem:fourier-ball}
For \(a \in \mathbb{Q}_p^d\) and \(n \in \mathbb{Z}\),
\[
\widehat{\mathbf{1}_{a + p^n\mathbb{Z}_p^d}}(\xi) = p^{-nd} \chi(-a \cdot \xi) \mathbf{1}_{p^{-n}\mathbb{Z}_p^d}(\xi).
\]
\end{lemma}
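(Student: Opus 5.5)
The plan is to reduce to the case \(a=0\) by a translation, and then compute the transform of \(\mathbf{1}_{p^n\mathbb{Z}_p^d}\) one coordinate at a time.

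\textbf{Step 1 (translation).} Substituting \(x = a + y\) and using translation invariance of \(\mathfrak{m}\) together with the multiplicativity \(\chi(u+v)=\chi(u)\chi(v)\) (which holds because \(\{u+v\}-\{u\}-\{v\}\in\mathbb{Z}\)), we get
\[
\widehat{\mathbf{1}_{a+p^n\mathbb{Z}_p^d}}(\xi)=\int_{p^n\mathbb{Z}_p^d}\chi(-\xi\cdot(a+y))\,dy=\chi(-a\cdot\xi)\,\widehat{\mathbf{1}_{p^n\mathbb{Z}_p^d}}(\xi).
\]

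\textbf{Step 2 (product structure).} The set \(p^n\mathbb{Z}_p^d\) is the product \(\prod_j p^n\mathbb{Z}_p\), the measure is the product Haar measure, and \(\chi(-\xi\cdot y)=\prod_j\chi(-\xi_j y_j)\). Fubini therefore factors the integral as
\[
\prod_{j=1}^d\int_{p^n\mathbb{Z}_p}\chi(-\xi_j y_j)\,dy_j .
\]
Likewise, \(\mathbf{1}_{p^{-n}\mathbb{Z}_p^d}(\xi)=\prod_j \mathbf{1}_{p^{-n}\mathbb{Z}_p}(\xi_j)\), because the norm on \(\mathbb{Q}_p^d\) is the max norm. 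So it suffices to prove the one-dimensional identity
\[
\int_{p^n\mathbb{Z}_p}\chi(-\eta y)\,dy=p^{-n}\mathbf{1}_{p^{-n}\mathbb{Z}_p}(\eta).
\]

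\textbf{Step 3 (one dimension).} We split into two cases.

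\emph{Case \(|\eta|_p\le p^{n}\).} For \(y\in p^n\mathbb{Z}_p\) we have \(\eta y\in\mathbb{Z}_p\), so \(\{-\eta y\}=0\) and the integrand is identically \(1\). The integral is then \(\mathfrak{m}(p^n\mathbb{Z}_p)=p^{-n}\).

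\emph{Case \(|\eta|_p=p^{m}\) with \(m>n\).} Substitute \(y=-\eta^{-1}u\). This maps \(p^n\mathbb{Z}_p\) onto \(p^{n-m}\mathbb{Z}_p\) and scales Haar measure by \(|\eta|_p^{-1}=p^{-m}\). The integral becomes
\[
p^{-m}\int_{p^{-(m-n)}\mathbb{Z}_p}\chi(u)\,du ,
\]
which vanishes by the orthogonality property (2) of Section~\ref{subsec:fourier-analysis}, since \(m-n\ge1\). If one prefers not to cite that property directly, it can be verified as follows: the integral \(I\) over the subgroup \(H=p^{-k}\mathbb{Z}_p\) (\(k\ge1\)) is invariant under translation by \(p^{-1}\in H\). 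Hence \(I=\chi(p^{-1})I=e^{2\pi i/p}I\), which forces \(I=0\).

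The only step requiring care is the change of variables \(d(cu)=|c|_p\,du\) for \(c\in\mathbb{Q}_p^\times\). This is the standard scaling property of Haar measure on \(\mathbb{Q}_p\). It can be checked on balls, since multiplication by \(c\) sends \(p^k\mathbb{Z}_p\) to \(p^{k+v_p(c)}\mathbb{Z}_p\), and then extended by uniqueness of Haar measure. Combining Steps 1–3 gives the stated formula.
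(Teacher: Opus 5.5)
Your proof is correct. It shares the paper's first step (translate to $a=0$ and pull out $\chi(-a\cdot\xi)$), but it evaluates the remaining integral differently.

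The paper stays in $d$ dimensions. It quotes, without proof, the general identity $\int_H\chi(-\xi\cdot y)\,dy=\mathfrak{m}(H)\mathbf{1}_{H^\perp}(\xi)$ for the compact open subgroup $H=p^n\mathbb{Z}_p^d$. It then identifies $H^\perp=p^{-n}\mathbb{Z}_p^d$ from the fact that $\chi(t)=1$ exactly when $t\in\mathbb{Z}_p$. You instead factor the integral coordinatewise by Fubini and prove the one-dimensional dichotomy by hand. In the nontrivial case you use the scaling rule $d(cu)=|c|_p\,du$ to reduce to orthogonality property (2), or to your translation-invariance argument.

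The paper's version is shorter and coordinate-free, so it applies verbatim to any compact open subgroup. Yours is self-contained and actually proves the fact the paper takes for granted.

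Neither Fubini nor the scaling of Haar measure is really needed, though. Your translation trick works directly on $H$: take $h=p^n e_j$, with $e_j$ the $j$-th standard basis vector and $j$ a coordinate where $|\xi_j|_p>p^n$. Then $\chi(-\xi\cdot h)=\chi(-p^n\xi_j)\neq 1$, which forces the integral to vanish. That one-line argument is precisely the standard proof of the annihilator identity the paper cites.
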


\begin{proof}
Recall the Fourier transform on $\mathbb{Q}_p^d$
\[
\widehat{f}(\xi) = \int_{\mathbb{Q}_p^d} f(x) \, \overline{\chi_\xi(x)} \, dx = \int_{\mathbb{Q}_p^d} f(x) \, \chi(-\xi \cdot x) \, dx.
\]
For $f = \mathbf{1}_{a + p^n \mathbb{Z}_p^d}$, we have
\begin{align*}
\widehat{\mathbf{1}_{a + p^n \mathbb{Z}_p^d}}(\xi)
&= \int_{a + p^n \mathbb{Z}_p^d} \chi(-\xi \cdot x) \, dx\\
&= \int_{p^n \mathbb{Z}_p^d} \chi(-\xi \cdot (a + y)) \, dy\\
&= \chi(-\xi \cdot a) \int_{p^n \mathbb{Z}_p^d} \chi(-\xi \cdot y) \, dy.
\end{align*}

Now, note that the remaining integral is the Fourier transform of $\mathbf{1}_{p^n \mathbb{Z}_p^d}$ at $\xi$.
For the compact open subgroup $H = p^n \mathbb{Z}_p^d$, we have
\[
\int_{H} \chi(-\xi \cdot y) \, dy = \mathfrak{m}(H) \cdot \mathbf{1}_{H^\perp}(\xi),
\]
where $$H^\perp = \{\xi \in \mathbb{Q}_p^d : \chi(-\xi \cdot y) = 1 \text{ for all } y \in H\}$$ is the annihilator of $H$.

Since $\chi(-\xi \cdot y) = e^{-2\pi i \{\xi \cdot y\}}$, we have $\chi(-\xi \cdot y) = 1$ for all $y \in H$ if and only if $\xi \cdot y \in \Zp^d$ for all $y \in p^n \mathbb{Z}_p^d$, i.e., $\xi \in p^{-n} \mathbb{Z}_p^d$. Thus, $H^\perp = p^{-n} \mathbb{Z}_p^d$.

Moreover, $\mathfrak{m}(p^n \mathbb{Z}_p^d) = p^{-nd}$ (since $\mathfrak{m}(\mathbb{Z}_p^d) = 1$). Therefore,
\[
\int_{p^n \mathbb{Z}_p^d} \chi(-\xi \cdot y) \, dy = p^{-nd} \, \mathbf{1}_{p^{-n} \mathbb{Z}_p^d}(\xi).
\]
Substituting back, we get
\[
\widehat{\mathbf{1}_{a + p^n \mathbb{Z}_p^d}}(\xi) = \chi(-\xi \cdot a) \cdot p^{-nd} \cdot \mathbf{1}_{p^{-n} \mathbb{Z}_p^d}(\xi).
\]
\end{proof}


\begin{lemma}[Orthogonality of characters on cosets]\label{lem:orthogonality-cosets}
Let \(H = p^n\mathbb{Z}_p^d\). For \(\xi, \eta \in \mathbb{Q}_p^d\),
\[
\int_{a + H} \chi_\xi(x) \overline{\chi_\eta(x)} \, dx = p^{-nd} \chi((\xi - \eta) \cdot a)
\mathbf{1}_{p^{-n}\mathbb{Z}_p^d}(\xi - \eta).
\]
\end{lemma}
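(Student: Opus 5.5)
The identity follows directly from the preceding result on Fourier transforms of balls, Lemma \ref{lem:fourier-ball}. First combine the two characters into one. By definition, $\chi_\xi(x)\overline{\chi_\eta(x)} = \chi(\xi\cdot x)\overline{\chi(\eta\cdot x)}$. Since $\chi$ is a homomorphism from $(\mathbb{Q}_p,+)$ to the unit circle, $\overline{\chi(t)}=\chi(-t)$, and therefore
\[
\chi_\xi(x)\overline{\chi_\eta(x)} = \chi((\xi-\eta)\cdot x).
\]
Set $\zeta=\xi-\eta$. The left-hand side of the statement then equals
\[
\int_{a+H}\chi(\zeta\cdot x)\,dx=\int_{\mathbb{Q}_p^d}\mathbf{1}_{a+H}(x)\,\chi(-(-\zeta)\cdot x)\,dx=\widehat{\mathbf{1}_{a+H}}(-\zeta).
\]

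Next apply Lemma \ref{lem:fourier-ball} at the point $-\zeta$. It gives
\[
\widehat{\mathbf{1}_{a+H}}(-\zeta)=p^{-nd}\,\chi(a\cdot\zeta)\,\mathbf{1}_{p^{-n}\mathbb{Z}_p^d}(-\zeta).
\]
The set $p^{-n}\mathbb{Z}_p^d$ is a subgroup, so it is symmetric and $\mathbf{1}_{p^{-n}\mathbb{Z}_p^d}(-\zeta)=\mathbf{1}_{p^{-n}\mathbb{Z}_p^d}(\zeta)$. The scalar product is symmetric, so $\chi(a\cdot\zeta)=\chi((\xi-\eta)\cdot a)$. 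Substituting these two facts yields exactly the claimed formula.

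Alternatively, one could argue directly. Substitute $x=a+y$ with $y\in H$ and use translation invariance of Haar measure to factor out $\chi(\zeta\cdot a)$. What remains is $\int_H\chi(\zeta\cdot y)\,dy$, which equals $\mathfrak m(H)=p^{-nd}$ when $\zeta$ lies in the annihilator $p^{-n}\mathbb{Z}_p^d$, and vanishes otherwise because it is the integral of a nontrivial character over a compact group.

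No step is a genuine obstacle. The only care needed is sign bookkeeping: the Fourier transform in this paper carries $\chi(-\xi\cdot x)$, so the transform must be evaluated at $-\zeta$ rather than $\zeta$, and the sign must then be removed using the symmetry of $p^{-n}\mathbb{Z}_p^d$.
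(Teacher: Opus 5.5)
Your proposal is correct and takes the paper's route. The paper only says the identity follows from Lemma~\ref{lem:fourier-ball} and basic character properties, which is exactly your reduction $\chi_\xi\overline{\chi_\eta}=\chi((\xi-\eta)\cdot x)$, followed by evaluating $\widehat{\mathbf{1}_{a+H}}$ at $-(\xi-\eta)$ and using the symmetry of $p^{-n}\mathbb{Z}_p^d$; your sign bookkeeping is right, and your alternative direct argument just repeats the proof of Lemma~\ref{lem:fourier-ball} itself.
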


\begin{proof}
This follows directly from Lemma \ref{lem:fourier-ball} and the properties of characters.
\end{proof}


\section{Proof of the main rsults}\label{mainresult}

In this section,
we provide a detailed  proof of Theorem~\ref{main}, breaking it down into clear steps and addressing all technical nuances.

\subsection{Tiling Equivalence}\label{subsec:tiling-proof}

We begin by restating and proving the tiling equivalence.

\begin{lemma}[Tiling equivalence]\label{lem:tiling-equiv}
Let \(\gamma \ge 0\) be an integer and \(C \subset (\mathbb{Z}/p^\gamma\mathbb{Z})^d\). Define the compact open set
\[
\Omega = \bigsqcup_{c \in C} \left( c + p^\gamma \mathbb{Z}_p^d \right),
\]
where \(C\) is identified with a set of representatives in \(\{0,1,\dots,p^\gamma-1\}^d \subset \mathbb{Z}_p^d\). Then \(\Omega\) tiles \(\Zp^d\) by translation if and only if \(C\) tiles \((\mathbb{Z}/p^\gamma\mathbb{Z})^d\) by translation.
\end{lemma}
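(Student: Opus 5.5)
The plan is to reduce everything to the quotient map
\[
\pi:\mathbb{Z}_p^d\to G:=(\mathbb{Z}/p^\gamma\mathbb{Z})^d = \mathbb{Z}_p^d/p^\gamma\mathbb{Z}_p^d ,
\]
using the fact that every translate of \(\Omega\) that meets \(\mathbb{Z}_p^d\) is a union of cosets of \(H=p^\gamma\mathbb{Z}_p^d\). Each coset has positive measure \(p^{-\gamma d}\), so the ``almost everywhere'' in (\ref{tilings}) will turn into an exact combinatorial statement about cosets.

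\emph{Sufficiency.} Suppose \(C\oplus T'=G\) with \(T'\subset G\). Let \(T\subset\{0,1,\dots,p^\gamma-1\}^d\) be the set of representatives of \(T'\). For \(t\in T\), Lemma~\ref{lem:partition} gives
\[
\Omega+t=\bigsqcup_{c\in C}(c+t+H).
\]
Hence \(\sum_{t\in T}\mathbf{1}_\Omega(x-t)\) counts the pairs \((c,t)\in C\times T'\) with \(c+t\equiv\pi(x)\pmod{p^\gamma}\). This count is exactly \(1\) for every \(x\in\mathbb{Z}_p^d\), so \(\Omega\oplus T=\mathbb{Z}_p^d\).

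\emph{Necessity.} Suppose \(\{\Omega+t\}_{t\in T}\) partitions \(\mathbb{Z}_p^d\) up to null sets, where we may discard any \(t\) with \(\mathfrak{m}((\Omega+t)\cap\mathbb{Z}_p^d)=0\). For each remaining \(t\), some \(x\in\mathbb{Z}_p^d\) satisfies \(x=c+y+t\) with \(c\in C\) and \(y\in H\). Since \(c,y\in\mathbb{Z}_p^d\), this forces \(t\in\mathbb{Z}_p^d\), and then \(\Omega+t\subset\mathbb{Z}_p^d\). Therefore \(\Omega+t\) is the union of the \(H\)-cosets indexed by \(C+\pi(t)\subset G\).

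Two \(H\)-cosets either coincide or are disjoint, and each has positive measure. So the a.e. identity \(\sum_t\mathbf{1}_\Omega(x-t)=1\) on \(\mathbb{Z}_p^d\) becomes exact on each coset \(g+H\), and it says: for every \(g\in G\) there is exactly one pair \((c,t)\in C\times T\) with \(c+\pi(t)=g\). This has two consequences. First, \(\pi\) is injective on \(T\): if \(\pi(t)=\pi(t')\) with \(t\neq t'\), every coset in \(C+\pi(t)\) would be counted twice. Second, \(C\oplus\pi(T)=G\). Thus \(C\) tiles \(G\).

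\emph{The step needing the most care.} This is the passage from the a.e. condition to the exact coset count, together with the observation that every relevant \(t\) lies in \(\mathbb{Z}_p^d\). Both rest only on \(\Omega\subset\mathbb{Z}_p^d\) and on \(\Omega\) being a union of \(H\)-cosets.

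\emph{Upgrading to \(\mathbb{Q}_p^d\), as in Theorem~\ref{main}.} I would add the following two remarks.
\begin{itemize}
\item If \(\Omega\oplus T=\mathbb{Z}_p^d\), then \(\Omega\oplus(T+\mathbb{L}^d)=\mathbb{Q}_p^d\), using \(\mathbb{Q}_p^d=\mathbb{Z}_p^d\oplus\mathbb{L}^d\).
\item Conversely, suppose \(\Omega\oplus T=\mathbb{Q}_p^d\). Since \(\Omega\subset\mathbb{Z}_p^d\), each translate \(\Omega+t\) lies in the single coset \(t+\mathbb{Z}_p^d\). Hence the translates with \(t\in\mathbb{Z}_p^d\) already tile \(\mathbb{Z}_p^d\) a.e., and the argument above applies.
\end{itemize}
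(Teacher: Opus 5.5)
Your proposal is correct and follows essentially the same route as the paper: lift a tiling complement \(T'\subset G\) to representatives in \(\{0,\dots,p^\gamma-1\}^d\) for sufficiency, push \(T\) forward by \(\pi\) for necessity, and pass to \(\mathbb{Q}_p^d\) using the fact that each translate \(\Omega+t\) either lies in \(\mathbb{Z}_p^d\) or misses it (the paper's remark before Corollary~\ref{corollary}). Your argument is more careful than the paper's, which assumes from the start an exact partition with \(T\subset\mathbb{Z}_p^d\), because you use the positive measure of each \(H\)-coset to turn the a.e.\ tiling identity into an exact count and you check that every relevant \(t\) lies in \(\mathbb{Z}_p^d\).
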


\begin{proof}

Assume that \(C\) tiles the finite group \(G = (\mathbb{Z}/p^\gamma\mathbb{Z})^d\) with a tiling set \(T^* \subset G\), so that
\[
G = C \oplus T^*.
\]
That means every element of \(G\) can be written uniquely as \(c + t^*\) with \(c \in C\), \(t^* \in T^*\).

Let \(T \subset \mathbb{Z}_p^d\) be a set of representatives for \(T^*\) under the canonical projection \(\pi : \mathbb{Z}_p^d \to G\). A natural choice is to take \(T = T^*\) by viewing \(T^*\) as a subset of \(\{0,1,\dots,p^\gamma-1\}^d\).

\textbf{Covering.} Take any \(y \in \Z_p^d\). Since \(\pi(y) \in G\), there exist unique \(c \in C\) and \(t^* \in T^*\) such that \(\pi(y) = c + t^*\). Let \(t \in T\) be the representative of \(t^*\). Then \(\pi(y-t) = c\), so \(y - t \in c + p^\gamma\mathbb{Z}_p^d\). Hence,
\[
y =t + (y - t) \in  t + c + p^\gamma\mathbb{Z}_p^d  \subset \Omega + t.
\]
Thus, we have \(y \in \Omega + T\). So, \(\Zp^d \subset \Omega + T\).
The reverse inclusion is obvious, so \(\Zp^d = \Omega + \Lambda\).

\textbf{Disjointness.} Suppose that for two distinct \(t_1, t_2 \in T\),
we have \((\Omega + t_1) \cap (\Omega + t_2) \neq \emptyset\).
Then there exist \(c_1, c_2 \in C\) and \(z_1, z_2 \in p^\gamma\mathbb{Z}_p^d\) such that
\[
c_1 + t_1 + z_1 = c_2 + t_2 + z_2.
\]
Rearranging,
\[
c_1 + t_1 - c_2 - t_2 = z_2 - z_1 \in p^\gamma\mathbb{Z}_p^d.
\]
Applying the projection \(\pi\),
\[
\pi(c_1) + \pi(t_1) - \pi(c_2) - \pi(t_2) = 0 \quad \text{in } G.
\]
Thus, \(\pi(c_1) + \pi(t_1) = \pi(c_2) + \pi(t_2)\). By the uniqueness of representation in \(C \oplus T^*\),
we must have \(\pi(c_1) = \pi(c_2)\) and \(\pi(t_1) = \pi(t_2)\). Since \(C\) and \(T\) are sets of representatives,
this implies \(c_1 = c_2\) and \(t_1 = t_2\). This is contradicting the assumption \(t_1 \neq t_2\).
Therefore, the translates are pairwise disjoint, and we conclude that \(\Omega \oplus T = \Zp^d\).


Now assume that \(\Omega\) tiles \(\Zp^d\) by translation, i.e., there exists a set \(T \subset \Zp^d\)
such that
\[
\Zp^d = \Omega \oplus T.
\]

Now, define \(T^* = \pi(T)\), where \(\pi : \mathbb{Z}_p^d \to G\) is the projection. We claim that \(G = C \oplus T^*\).

\textbf{Covering in \(G\).} Let \(x \in G\). Choose a lift \(\tilde{x} \in \mathbb{Z}_p^d\) such that \(\pi(\tilde{x}) = x\). Since \(\mathbb{Z}_p^d = \Omega \oplus T\), there exist unique \(c \in C\), \(z \in p^\gamma\mathbb{Z}_p^d\), and \(t \in T\) such that
\[
\tilde{x} = c + z + t.
\]
Applying \(\pi\),
\[
x = \pi(\tilde{x}) = \pi(c) + \pi(t) = c + \pi(t) \in C + T^*.
\]
Hence, \(G \subset C + T^*\).

\textbf{Disjointness in \(G\).} Suppose \(c_1 + t_1^* = c_2 + t_2^*\) in \(G\) for some \(c_1, c_2 \in C\) and \(t_1^*, t_2^* \in T^*\). Choose representatives \(t_1, t_2 \in T\) such that \(\pi(t_1) = t_1^*\) and \(\pi(t_2) = t_2^*\). Then
\[
\pi(c_1 + t_1) = \pi(c_2 + t_2),
\]
so \(c_1 + t_1 - (c_2 + t_2) \in p^\gamma\mathbb{Z}_p^d\). Write
\[
c_1 + t_1 = c_2 + t_2 + z, \quad z \in p^\gamma\mathbb{Z}_p^d.
\]
Then
\[
c_1 + t_1 + p^\gamma\mathbb{Z}_p^d = c_2 + t_2 + p^\gamma\mathbb{Z}_p^d.
\]
Since \(\Omega \oplus T\) is a tiling, the sets \(\Omega + t_1\) and \(\Omega + t_2\) are
either disjoint or identical. The above equality shows they intersect, so they must be identical,
which implies \(t_1 = t_2\). Then from \(c_1 + t_1 = c_2 + t_1 + z\), we get
\(c_1 - c_2 = z \in p^\gamma\mathbb{Z}_p^d\), so \(\pi(c_1) = \pi(c_2)\),
hence \(c_1 = c_2\). Thus, the representation in \(C + T^*\) is unique.

Therefore, \(G = C \oplus T^*\), so \(C\) tiles \(G\) with tiling set \(T^*\).
\end{proof}
We observe that for each \(\alpha \in \mathbb{Q}_p^d\),
either \((\Omega + \alpha) \cap \mathbb{Z}_p^d = \emptyset\) or \((\Omega + \alpha) \subseteq\mathbb{Z}_p^d \).
Consequently, \(\Omega\) tiles \(\mathbb{Z}_p^d\)
if and only if it tiles \(\mathbb{Q}_p^d\). This yields the following corollary.

\begin{corollary}\label{corollary}
The set \(\Omega\) tiles \(\Qp^d\) by translation if and only if \(C\) tiles \((\mathbb{Z}/p^\gamma\mathbb{Z})^d\) by translation.
\end{corollary}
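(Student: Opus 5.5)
By Lemma~\ref{lem:tiling-equiv}, $C$ tiles $(\mathbb{Z}/p^\gamma\mathbb{Z})^d$ if and only if $\Omega$ tiles $\mathbb{Z}_p^d$. So it suffices to show that $\Omega$ tiles $\mathbb{Z}_p^d$ if and only if $\Omega$ tiles $\mathbb{Q}_p^d$. The key fact is the observation preceding the corollary. Since $\Omega\subset\mathbb{Z}_p^d$ and $\mathbb{Z}_p^d$ is a subgroup, $\Omega+\alpha$ meets $\mathbb{Z}_p^d$ if and only if $\alpha\in\mathbb{Z}_p^d$, and in that case $\Omega+\alpha\subseteq\mathbb{Z}_p^d$. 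More generally, for every coset $\beta+\mathbb{Z}_p^d$, a translate $\Omega+\alpha$ either lies inside it or is disjoint from it, according to whether $\alpha-\beta\in\mathbb{Z}_p^d$.

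\textbf{From $\mathbb{Z}_p^d$ to $\mathbb{Q}_p^d$.} Suppose $\Omega\oplus T=\mathbb{Z}_p^d$. Use the standard quasi-lattice $\mathbb{L}^d$ of Section~\ref{subsec:quasi-lattices}, which satisfies $\mathbb{Q}_p^d=\mathbb{L}^d\oplus\mathbb{Z}_p^d$, and set $T'=T+\mathbb{L}^d$. Then
\[
\Omega+T'=(\Omega+T)+\mathbb{L}^d=\mathbb{Z}_p^d+\mathbb{L}^d=\mathbb{Q}_p^d,
\]
which gives covering. For disjointness, suppose $\Omega+t_1+\gamma_1$ meets $\Omega+t_2+\gamma_2$. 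Both translates lie in the cosets $\gamma_1+\mathbb{Z}_p^d$ and $\gamma_2+\mathbb{Z}_p^d$ respectively, so these cosets meet, hence coincide, which forces $\gamma_1=\gamma_2$. Disjointness of $\{\Omega+t\}_{t\in T}$ then forces $t_1=t_2$. The same argument also shows that the sum $T+\mathbb{L}^d$ is direct, so the translates are indexed without repetition.

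\textbf{From $\mathbb{Q}_p^d$ to $\mathbb{Z}_p^d$.} Suppose $\Omega\oplus T=\mathbb{Q}_p^d$, up to null sets. Set $T_0=T\cap\mathbb{Z}_p^d$. By the observation, the only translates $\Omega+t$ that meet $\mathbb{Z}_p^d$ are those with $t\in T_0$, and each of these lies entirely inside $\mathbb{Z}_p^d$. Therefore $\{\Omega+t\}_{t\in T_0}$ partitions $\mathbb{Z}_p^d$ up to null sets.

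\textbf{The main obstacle.} Lemma~\ref{lem:tiling-equiv} is phrased with exact set equalities, whereas tilings of $\mathbb{Q}_p^d$ are only defined almost everywhere, so I must upgrade a.e.\ tilings to exact ones. Each translate $\Omega+t$ is a finite union of cosets of $p^\gamma\mathbb{Z}_p^d$, and each such coset has positive measure. A pairwise overlap or an uncovered point would therefore produce a whole coset of positive measure that is doubly covered or uncovered. Hence an a.e.\ tiling is an exact partition, and the reverse direction is immediate. The finite-group statement then follows from Lemma~\ref{lem:tiling-equiv}.
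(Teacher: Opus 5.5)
Your proposal is correct and follows the paper's route. The paper's entire argument is the observation that each translate $\Omega+\alpha$ either lies in $\mathbb{Z}_p^d$ or misses it, so tiling $\mathbb{Z}_p^d$ is equivalent to tiling $\mathbb{Q}_p^d$, and this is then combined with Lemma~\ref{lem:tiling-equiv}; you make both directions explicit (the tiling set $T+\mathbb{L}^d$ one way, the restriction $T\cap\mathbb{Z}_p^d$ the other) and also handle the passage from almost-everywhere to exact tilings, which the paper leaves implicit.
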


\subsection{Spectral Equivalence}\label{subsec:spectral-proof}

We now prove the spectral equivalence. The key step is the following lemma, which relates
spectrality in the finite group to spectrality in \(\mathbb{Q}_p^d\) for an associated atomic measure.

\begin{lemma}[Spectral equivalence for measures]\label{lem:spectral-equivalence}
Let \(G = (\mathbb{Z}/p^\gamma\mathbb{Z})^d\) with \(\gamma \ge 1\). Identify \(G\) with \(\{0,1,\dots,p^\gamma-1\}^d \subset \mathbb{Z}_p^d \subset \mathbb{Q}_p^d\). Let \(C, \Lambda \subset G\). Then \((C, \Lambda)\) is a spectral pair in \(G\)
if and only if \((\delta_C, \frac{1}{p^\gamma}\Lambda)\) is a spectral pair in \(\mathbb{Q}_p^d\), where \(\delta_C = \frac{1}{|C|}\sum_{c \in C} \delta_c\) is the uniform probability measure on \(C\), and \(\frac{1}{p^\gamma}\Lambda = \{\lambda/p^\gamma : \lambda \in \Lambda\} \subset \mathbb{Q}_p^d\).
\end{lemma}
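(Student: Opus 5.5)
Both sides reduce to statements about exponential sums on $C$. Since $\delta_C$ is supported on the finite set $C$, the space $L^2(\delta_C)$ is $|C|$-dimensional and identified with $\ell^2(C)$ via $f\mapsto (f(c))_{c\in C}$, with inner product $\langle f,g\rangle_{L^2(\delta_C)}=\frac{1}{|C|}\sum_{c\in C}f(c)\overline{g(c)}$. The plan is to show that the two systems of exponentials are literally the same vectors in this space. Then being an orthogonal basis is a single property, and the lemma follows. No analytic limiting argument should be needed, because everything is finite-dimensional.

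\textbf{Step 1 (identification of characters).} By the computation at the end of Section~\ref{subsec:finite-harmonic}, for $\lambda,c\in\{0,\dots,p^\gamma-1\}^d$ we have
\[
\chi_{\lambda/p^\gamma}(c)=e^{2\pi i\{\lambda\cdot c/p^\gamma\}}=e^{\frac{2\pi i}{p^\gamma}\lambda\cdot c}=\psi_\lambda(c).
\]
This holds because $\lambda\cdot c\in\mathbb Z$, so $\{\lambda\cdot c/p^\gamma\}$ differs from $(\lambda\cdot c \bmod p^\gamma)/p^\gamma$ by an integer. Hence $E(\frac{1}{p^\gamma}\Lambda)$ restricted to $\operatorname{supp}\delta_C=C$ coincides with $\{\psi_\lambda|_C\}_{\lambda\in\Lambda}$. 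The map $\lambda\mapsto\lambda/p^\gamma$ is injective, so the two index sets have the same cardinality.

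\textbf{Step 2 (orthogonality).} For $\lambda\ne\lambda'$ in $\Lambda$,
\[
\langle\chi_{\lambda/p^\gamma},\chi_{\lambda'/p^\gamma}\rangle_{L^2(\delta_C)}=\widehat{\delta_C}\Bigl(\tfrac{\lambda'-\lambda}{p^\gamma}\Bigr)=\frac{1}{|C|}\sum_{c\in C}e^{\frac{2\pi i}{p^\gamma}(\lambda-\lambda')\cdot c}.
\]
This is the same as the finite-group inner product of $\psi_\lambda|_C$ and $\psi_{\lambda'}|_C$. So orthogonality on one side is equivalent to orthogonality on the other. Each exponential has norm $1$ in both settings.

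\textbf{Step 3 (completeness).} In a $|C|$-dimensional space, an orthogonal family of nonzero vectors is a basis if and only if it has exactly $|C|$ elements. By Step 1 the cardinalities agree, so completeness is also equivalent.

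As a cross-check, one can also verify the criterion $\sum_{\gamma}|\widehat{\delta_C}(\gamma-\xi)|^2=1$ for all $\xi\in\mathbb Q_p^d$ from Section~\ref{subsec:spectral-criteria}. This holds by Bessel/Parseval applied to the vector $(\chi_\xi(c))_{c\in C}$. The only mild subtlety, and the step I expect to need the most care, is that the measure criterion quantifies over all $\xi\in\mathbb Q_p^d$ rather than over $G$. The finite-dimensional basis argument sidesteps this entirely, so I would present the basis argument and mention the criterion only as a remark.
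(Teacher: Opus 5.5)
Your proof is correct and follows essentially the same route as the paper. Both arguments identify $\chi_{\lambda/p^\gamma}|_C$ with $\psi_\lambda|_C$, match orthogonality through $\widehat{\delta_C}\bigl((\lambda-\lambda')/p^\gamma\bigr)$, and get completeness from $\dim L^2(\delta_C)=|C|$. The one difference is minor: the paper uses unitarity of the matrix $U$ for completeness in the forward direction and the dimension count only for the converse, whereas you use the dimension count in both directions, which is slightly cleaner.
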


\begin{proof}

Assume that \((C, \Lambda)\) is a spectral pair in \(G\). Then the matrix
\[
U = \left( \frac{1}{\sqrt{|C|}} e^{\frac{2\pi i}{p^\gamma} \lambda \cdot c} \right)_{\lambda \in \Lambda, c \in C}
\]
is unitary. In particular, the characters \(\psi_\lambda\) are orthogonal in \(\ell^2(C)\),
\[
\langle \psi_\lambda, \psi_{\lambda'} \rangle_{\ell^2(C)} = \frac{1}{|C|} \sum_{c \in C} \psi_{\lambda - \lambda'}(c) = \delta_{\lambda, \lambda'}.
\]
Using the inner product definition in finite groups, this orthogonality condition is equivalent to
\begin{equation}\label{dirac}
\widehat{\delta_C}\left( \frac{\lambda - \lambda'}{p^\gamma} \right) = 0 \quad \text{for all distinct } \lambda, \lambda' \in \Lambda.
\end{equation}

Now, consider the system \(E\left( \frac{1}{p^\gamma}\Lambda \right) = \left\{ \chi_{\lambda/p^\gamma} : \lambda \in \Lambda \right\}\) in \(L^2(\delta_C)\). For \(\lambda, \lambda' \in \Lambda\),
\begin{align*}
\int_{\mathbb{Q}_p^d} \chi_{\lambda/p^\gamma}(x) \overline{\chi_{\lambda'/p^\gamma}(x)} \, d\delta_C(x)
&= \frac{1}{|C|} \sum_{c \in C} \chi\left( \frac{\lambda - \lambda'}{p^\gamma} \cdot c \right)\\
&= \widehat{\delta_C}\left( \frac{\lambda - \lambda'}{p^\gamma} \right).
\end{align*}
By (\ref{dirac}), this is zero for \(\lambda \neq \lambda'\), and for \(\lambda = \lambda'\), it is \(\widehat{\delta_C}(0) = 1\). Thus, \(E\left( \frac{1}{p^\gamma}\Lambda \right)\) is an orthonormal system in \(L^2(\delta_C)\).

To show completeness, we need to verify that for every \(f \in L^2(\delta_C)\),
\[
\sum_{\lambda \in \Lambda} \left| \langle f, \chi_{\lambda/p^\gamma} \rangle_{L^2(\delta_C)} \right|^2 = \|f\|_{L^2(\delta_C)}^2.
\]
Since \(\delta_C\) is supported on the finite set \(C\), it suffices to check this for functions of the form
\(f = \mathbf{1}_{\{c\}}\) for \(c \in C\). But because the matrix \(U\) is unitary, the rows are also orthonormal, which gives
\[
\sum_{\lambda \in \Lambda} \left| \frac{1}{\sqrt{|C|}} e^{\frac{2\pi i}{p^\gamma} \lambda \cdot c} \right|^2 = 1,
\]
and more generally, the Parseval identity holds for all functions on \(C\). This implies that \(E\left( \frac{1}{p^\gamma}\Lambda \right)\) is complete in \(L^2(\delta_C)\). Hence, \((\delta_C, \frac{1}{p^\gamma}\Lambda)\) is a spectral pair.


Conversely, assume that \((\delta_C, \frac{1}{p^\gamma}\Lambda)\) is a spectral pair in \(\mathbb{Q}_p^d\). Then \(E\left( \frac{1}{p^\gamma}\Lambda \right)\) is an orthonormal basis for \(L^2(\delta_C)\). In particular, for distinct \(\lambda, \lambda' \in \Lambda\),
\[
\widehat{\delta_C}\left( \frac{\lambda - \lambda'}{p^\gamma} \right) = 0,
\]
which is equivalent to the orthogonality of \(\psi_\lambda\) and \(\psi_{\lambda'}\) in \(\ell^2(C)\). Moreover, since the system is complete, the number of elements in \(\Lambda\) must equal the dimension of \(L^2(\delta_C)\), which is \(|C|\). Thus, \(|\Lambda| = |C|\), and the orthogonal system \(\{\psi_\lambda\}_{\lambda \in \Lambda}\) is a basis for \(\ell^2(C)\). Therefore, \((C, \Lambda)\) is a spectral pair in \(G\).
\end{proof}


\begin{lemma}[Spectral equivalence for sets]\label{lem:spectral-equiv}
Let \(n \ge 0\) be an integer and \(C \subset (\mathbb{Z}/p^n\mathbb{Z})^d\).
Let
\[
\Omega = \bigsqcup_{c \in C} \left( c + p^n \mathbb{Z}_p^d \right),
\]
where \(C\) is identified with a set of representatives in \(\{0,1,\dots, p^n-1\}^d \subset \mathbb{Z}_p^d\).
Then \(\Omega\) is spectral in \(\Qp^d\) if and only if \(C\) is spectral in  \((\mathbb{Z}/p^n\mathbb{Z})^d\).
\end{lemma}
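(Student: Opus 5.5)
The proof rests on factoring $\mathbf{1}_\Omega$ as a convolution. By Lemma~\ref{lem:partition} and Lemma~\ref{lem:fourier-ball},
\[
\widehat{\mathbf{1}_\Omega}(\xi)=p^{-nd}\,|C|\,\widehat{\delta_C}(\xi)\,\mathbf{1}_{p^{-n}\mathbb{Z}_p^d}(\xi),\qquad \mathfrak{m}(\Omega)=p^{-nd}|C|.
\]
Here $\widehat{\delta_C}(\xi)=\frac{1}{|C|}\sum_{c\in C}\chi(-c\cdot\xi)$. Since $C\subset\mathbb{Z}_p^d$, the function $\widehat{\delta_C}$ is $\mathbb{Z}_p^d$-periodic. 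Dividing by $\mathfrak{m}(\Omega)^2$, the spectral criterion (Lemma~\ref{lem:spectral-criterion}) for a candidate set $\Gamma$ becomes
\[
\sum_{\gamma\in\Gamma,\ \gamma-\xi\in p^{-n}\mathbb{Z}_p^d}|\widehat{\delta_C}(\xi-\gamma)|^2=1\qquad\text{for all }\xi\in\mathbb{Q}_p^d.
\]
Both directions will be reduced to this identity. The case $n=0$ is trivial, since then $\Omega=\mathbb{Z}_p^d$ and $|G|=1$, so I assume $n\ge1$.

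\textbf{($C$ spectral $\Rightarrow$ $\Omega$ spectral.)} Let $(C,\Lambda)$ be a spectral pair in $G$. I take
\[
\Gamma=\tfrac{1}{p^n}\Lambda+\mathbb{L}^d_n,\qquad \mathbb{L}_n^d=p^{-n}\mathbb{L}^d,
\]
which is a complete set of representatives of $\mathbb{Q}_p^d/p^{-n}\mathbb{Z}_p^d$. Fix $\xi$. Because $\lambda/p^n\in p^{-n}\mathbb{Z}_p^d$, for each $\lambda\in\Lambda$ there is exactly one $\ell_\lambda\in\mathbb{L}^d_n$ with $\xi-\lambda/p^n-\ell_\lambda\in p^{-n}\mathbb{Z}_p^d$. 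The common value $\ell:=\ell_\lambda$ does not depend on $\lambda$. The left side of the criterion is therefore
\[
\sum_{\lambda\in\Lambda}|\widehat{\delta_C}(\xi-\ell-\lambda/p^n)|^2 .
\]
By Lemma~\ref{lem:spectral-equivalence}, $(\delta_C,\frac1{p^n}\Lambda)$ is a spectral pair in $\mathbb{Q}_p^d$. The measure version of the criterion, applied at the point $\xi-\ell$, shows this sum equals $1$ (use $|\widehat{\delta_C}(-\eta)|=|\widehat{\delta_C}(\eta)|$). Hence $(\Omega,\Gamma)$ is a spectral pair.

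\textbf{($\Omega$ spectral $\Rightarrow$ $C$ spectral.)} Let $(\Omega,\Gamma)$ be a spectral pair. After translating $\Gamma$, assume $0\in\Gamma$, and set $\Gamma_0=\Gamma\cap p^{-n}\mathbb{Z}_p^d$.

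\emph{Step 1: $\Gamma_0$ injects into $p^{-n}\mathbb{Z}_p^d/\mathbb{Z}_p^d$.} Suppose $\gamma\neq\gamma'$ lie in $\Gamma_0$ with $\gamma-\gamma'\in\mathbb{Z}_p^d$. By periodicity, $\widehat{\mathbf{1}_\Omega}(\gamma-\gamma')=p^{-nd}|C|\,\widehat{\delta_C}(0)\neq0$. This contradicts orthogonality (Lemma~\ref{lem:orthogonality}). So reduction modulo $\mathbb{Z}_p^d$ is injective on $\Gamma_0$.

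\emph{Step 2: define $\Lambda$.} Writing $p^{-n}\mathbb{Z}_p^d/\mathbb{Z}_p^d\cong G$ via $k/p^n\leftrightarrow k$, the image of $\Gamma_0$ is a set $\Lambda\subset G$. By periodicity we may replace each $\gamma\in\Gamma_0$ by its representative $\lambda/p^n$ without changing any value of $\widehat{\delta_C}$.

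\emph{Step 3: apply the criterion at grid points.} Evaluate the reduced criterion at $\xi=k/p^n$ for $k\in G$. The only $\gamma$ that contribute are those in $\Gamma_0$, so
\[
\sum_{\lambda\in\Lambda}|\widehat{\delta_C}((k-\lambda)/p^n)|^2=1\qquad\text{for every }k\in G .
\]

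\emph{Step 4: count and conclude.} Sum this over $k\in G$. Parseval on $G$ gives $\sum_{k\in G}|\widehat{\delta_C}(k/p^n)|^2=p^{nd}/|C|$, so the left side becomes $|\Lambda|\,p^{nd}/|C|$ while the right side is $p^{nd}$. Hence $|\Lambda|=|C|$. Evaluating at $k=\lambda'\in\Lambda$ and using $\widehat{\delta_C}(0)=1$ forces $\widehat{\delta_C}((\lambda'-\lambda)/p^n)=0$ for $\lambda\neq\lambda'$. So $\{\psi_\lambda\}_{\lambda\in\Lambda}$ is an orthogonal system of $|C|$ vectors in $\ell^2(C)$, hence a basis, and $(C,\Lambda)$ is a spectral pair. (Alternatively one could conclude via Lemma~\ref{lem:spectral-equivalence}.)

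I expect the main obstacle to be this converse direction, and specifically the choice of the slice $\Gamma_0$ together with the injectivity of Step 1. That is what turns an infinite, possibly irregular spectrum $\Gamma$ into a subset of the finite group. The remaining work is bookkeeping of signs in $\widehat{\delta_C}(\pm\eta)$ and of the normalization $\mathfrak{m}(\Omega)^2$ in the criterion.
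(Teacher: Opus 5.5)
Your proof is correct. The direction ``$C$ spectral $\Rightarrow$ $\Omega$ spectral'' is the paper's argument: the same candidate spectrum $\frac{1}{p^n}\Lambda+\mathbb{L}_n^d$, and the same observation that for $\xi\in\ell+p^{-n}\mathbb{Z}_p^d$ only points of $\Gamma$ in that coset contribute.

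The converse follows a genuinely different route. The paper begins by asserting, without proof, that every spectrum of $\Omega$ has the product form $\Gamma=\Lambda_C+\mathbb{L}_n^d$ with $\Lambda_C\subset p^{-n}\mathbb{Z}_p^d$ finite. It then proceeds as follows:
\begin{itemize}
\item it obtains $S(z)=\sum_{\lambda\in\Lambda_C}|\widehat{\delta_C}(\lambda-z)|^2=1$ on $p^{-n}\mathbb{Z}_p^d$;
\item it expands $S$ in characters and uses linear independence to kill the off-diagonal coefficients $A_{c,c'}$, concluding $S\equiv 1$ on all of $\mathbb{Q}_p^d$;
\item it finishes via the measure statement of Lemma~\ref{lem:spectral-equivalence}.
\end{itemize}

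The product-form assertion is not true in general. Because the criterion decouples over cosets of $p^{-n}\mathbb{Z}_p^d$, different cosets may carry spectra of $C$ that are not translates of one another. For example, take $p=2$, $n=d=2$, $C=\{(0,0),(1,0)\}$. Both $\{(0,0),(2,0)\}$ and $\{(0,0),(2,1)\}$ are spectra of $C$, and placing them on different cosets gives a spectrum of $\Omega$ not of that form.

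Your slice $\Gamma_0=\Gamma\cap p^{-n}\mathbb{Z}_p^d$ looks only at the one coset that is needed, so it handles an arbitrary spectrum. This buys you several things:
\begin{itemize}
\item Finiteness of $\Gamma_0$ falls out of the injectivity in Step~1.
\item The count $|\Lambda|=|C|$, obtained by summing over $k\in G$ and using Parseval on $G$, replaces the paper's linear-independence step. As written, that step also needs the remark that $A_{c,c'}$ depends only on $c-c' \bmod p^n$, since distinct pairs $(c,c')$ can give the same character on $p^{-n}\mathbb{Z}_p^d$.
\item Concluding with a dimension count in $\ell^2(C)$ spares you the identity off $p^{-n}\mathbb{Z}_p^d$ entirely.
\end{itemize}

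In exchange, the paper's route keeps both directions symmetric by passing them through the measure $\delta_C$. It pays for this with the unjustified structural claim on $\Gamma$. Your version supplies the step that the paper's argument is missing.
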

\begin{proof}
By Lemma \ref{lem:spectral-equivalence}, it suffices to prove the equivalence:
\begin{center}
\(\Omega\) is a spectral set in \(\Qp^d \Leftrightarrow \delta_C\) is a spectral measure in  \(\Qp^d\).
\end{center}
Recall that
\[\Omega = \bigsqcup_{c \in C} (c + p^n \mathbb{Z}_p^d),\]
and
\[
\mathbb{Q}_p^d =p^{-n}\mathbb{Z}_p^d\oplus p^{-n}\mathbb{L}^d=B(0, p^n)\oplus \mathbb{L}_n^d.
\]

Suppose \(\Omega\) is a spectral set in \(\Qp^d\) with spectrum \(\Gamma\subset\Qp^d\). Then
\(\Gamma=\Lambda_C+\mathbb{L}_n^d\), where  \(\Lambda_C\subset B(0, p^n)\) is some finite set.
By Lemma~\ref{lem:spectral-criterion}, we have
\begin{equation}\label{criterion}
\sum_{\gamma \in \Lambda_C+\mathbb{L}_n^d} |\widehat{\mathbf{1}_\Omega}(\gamma - \xi)|^2 =
\mathfrak{m}(\Omega)^2 \quad \forall \xi \in \mathbb{Q}_p^d.
\end{equation}
Using Lemma~\ref{lem:fourier-ball} for the Fourier transform of a ball and the disjoint union structure, we have
\begin{align}\label{equation}
\widehat{\mathbf{1}_\Omega}(\xi)\nonumber
&= \sum_{c \in C} \widehat{\mathbf{1}_{c + p^n \mathbb{Z}_p^d}}(\xi)\\ \nonumber
&= p^{-nd} \mathbf{1}_{p^{-n}\mathbb{Z}_p^d}(\xi) \sum_{c \in C} \chi(-c \cdot \xi) \\
&= p^{-nd} |C| \mathbf{1}_{p^{-n}\mathbb{Z}_p^d}(\xi) \widehat{\delta_C}(\xi).
\end{align}

Substituting (\ref{equation}) into (\ref{criterion}) and using \(\mathfrak{m}(\Omega) = |C| p^{-nd}\) gives
\begin{equation}\label{measurecriterion}
\sum_{\gamma \in \in \Lambda_C+\mathbb{L}_n^d} \mathbf{1}_{p^{-n}\mathbb{Z}_p^d}(\gamma - \xi)
|\widehat{\delta_C}(\gamma - \xi)|^2=1, \quad \forall \xi \in \mathbb{Q}_p^d.
\end{equation}

 Every \( \xi \in \mathbb{Q}_p^d\) can be written uniquely as \(\xi =\ell_\xi+z\) with
 \(\ell_\xi \in \mathbb{L}_n^d\) and \(z\in p^{-n}\mathbb{Z}_p^d\).
 The factor  \(\mathbf{1}_{p^{-n}\mathbb{Z}_p^d}(\gamma - \xi)\) is nonzero only if
 \(\gamma\in \ell_\xi+p^{-n}\mathbb{Z}_p^d\).
Since \(\Gamma=\Lambda_C+\mathbb{L}_n^d\), we have \(\gamma=\lambda+\ell\) with
\(\lambda\in \Lambda_C\) and \(\ell\in\mathbb{L}_n^d\). Then
\(\gamma\in \ell_\xi+p^{-n}\mathbb{Z}_p^d\) if and only if \(\ell=\ell_\xi\).
Thus, \(\gamma - \xi=\lambda-\ell-(\ell_\xi-z)=\lambda-z\).
Consequently,

\[\sum_{\gamma \in \Lambda_C+\mathbb{L}_n^d} \mathbf{1}_{p^{-n}\mathbb{Z}_p^d}(\gamma - \xi)
|\widehat{\delta_C}(\gamma - \xi)|^2=\sum_{\lambda \in \Lambda_C}
|\widehat{\delta_C}(\lambda - z)|^2=1.\]

Now, we need to show that
\[\sum_{\lambda \in \Lambda_C}|\widehat{\delta_C}(\lambda-z)|^2
=\sum_{\lambda \in \Lambda_C}|\widehat{\delta_C}(\lambda-\xi)|^2, \quad \forall \xi \in \mathbb{Q}_p^d.\]

Define $S: \mathbb{Q}_p^d \to \mathbb{R}$ by
\[
S(\xi) = \sum_{\lambda \in \Lambda_C} |\widehat{\delta_C}(\lambda - \xi)|^2.
\]
Our goal is to show that $S(\xi) = 1$ for all $\xi \in \mathbb{Q}_p^d$, which will imply that
\((\delta_C, \Lambda_C)\) is a spectral pair.

From the above, we already have,
\[
S(z) = 1 \quad \text{for all } z \in p^{-n}\mathbb{Z}_p^d.
\]
In particular, $S(0) = 1$.

Write
\[
|\widehat{\delta_C}(\xi)|^2 = \frac{1}{|C|^2} \sum_{c, c' \in C} \chi(-(c - c') \cdot \xi).
\]
Then
\begin{align*}
S(\xi) &= \sum_{\lambda \in \Lambda_C} \frac{1}{|C|^2} \sum_{c, c' \in C} \chi(-(c - c') \cdot (\lambda - \xi)) \\
&= \frac{1}{|C|^2} \sum_{c, c' \in C} \left( \sum_{\lambda \in \Lambda_C} \chi(-(c - c') \cdot \lambda) \right) \chi((c - c') \cdot \xi).
\end{align*}
Set
\[
A_{c,c'} = \sum_{\lambda \in \Lambda_C} \chi(-(c - c') \cdot \lambda).
\]
Then
\[
S(\xi) = \frac{1}{|C|^2} \sum_{c, c' \in C} A_{c,c'} \chi((c - c') \cdot \xi).
\]

Restrict to \(z\in p^{-n}\mathbb{Z}_p^d\):
\[
S(z) = \frac{1}{|C|^2} \sum_{c, c' \in C} A_{c,c'} \chi((c - c') \cdot z).
\]


%
Since $S(z)=1$   on $p^{-n}\mathbb{Z}_p^d$, and the characters $\{\chi((c - c') \cdot \cdot)\}$ are linearly independent on $p^{-n}\mathbb{Z}_p^d$, we must have $A_{c,c'} = 0$ for all $c \neq c'$. Otherwise, nontrivial character contributions would make $S(z)$ non-constant.

More formally, If $c \neq c'$, then $(c - c') \notin p^n\mathbb{Z}_p^d$, so $\chi((c - c') \cdot \cdot)$ is a nontrivial character on $p^{-n}\mathbb{Z}_p^d$. By linear independence of distinct characters, the coefficient $A_{c,c'}$ must be zero.

With $A_{c,c'} = 0$ for $c \neq c'$, we have
\[
S(\xi) = \frac{1}{|C|^2} \sum_{c \in C} A_{c,c} \chi(0) = \frac{1}{|C|^2} \sum_{c \in C} A_{c,c}.
\]
But $A_{c,c} = \sum_{\lambda \in \Lambda_C} \chi(0) = |\Lambda_C|$, so
\[
S(\xi) = \frac{1}{|C|^2} \cdot |C| \cdot |\Lambda_C| = \frac{|\Lambda_C|}{|C|}.
\]

Since $S(0) = 1$, we have $\frac{|\Lambda_C|}{|C|} = 1$, hence $|\Lambda_C| = |C|$. Therefore,
\[
S(\xi) = 1 \quad \text{for all } \xi \in \mathbb{Q}_p^d.
\]

Thus \((\delta_C, \Lambda_C)\) is a spectral pair,
by Lemma~\ref{lem:spectral-equivalence}, \(C\) is a spectral set in \(G\).

Conversely, assume \((\delta_C, \Lambda_C)\) is a spectral pair. That means
\begin{equation}\label{discrete}
\sum_{\gamma \in \Lambda_C} |\widehat{\delta_C}(\gamma - \xi)|^2=1, \quad \forall \xi \in \mathbb{Q}_p^d.
\end{equation}

Let \(\Gamma = \Lambda_C + \mathbb{L}_n^d.\)
We claim that \(\Gamma\) is a spectrum for \(\Omega\). 
Using (\ref{equation}) and (\ref{discrete})

\begin{align}\label{measurecriterion}
\sum_{\gamma \in \Lambda_C+\mathbb{L}_n^d} |\widehat{\mathbf{1}_\Omega}(\gamma - \xi)|^2 \nonumber
&=|C|^2 p^{-2nd}\sum_{\gamma \in \in \Lambda_C+\mathbb{L}_n^d} \mathbf{1}_{p^{-n}\mathbb{Z}_p^d}(\gamma - \xi)
|\widehat{\delta_C}(\gamma - \xi)|^2\\ \nonumber
&= |C|^2 p^{-2nd}\sum_{\gamma \in \Lambda_C} |\widehat{\delta_C}(\gamma - z)|^2\\ \nonumber
&=|C|^2 p^{-2nd}\\ \nonumber
&=\mathfrak{m}(\Omega)^2 \quad \forall \xi \in \mathbb{Q}_p^d,
\end{align}
where we used the same decomposition \(\xi=\ell_\xi+z\) as before.
Hence  \((\Omega, \Gamma)\) is a spectral pair.
\end{proof}

Combining Lemma~\ref{lem:tiling-equiv}, Corollary \ref{corollary} and Lemma~\ref{lem:spectral-equiv},
we have established Theorem~\ref{main}.

\textbf{Declarations}

\textbf{Conflict of interest:} The author have no relevant financial or non-financial interests to disclose.

\textbf{Data availability statement:} No data sets were generated or analysed during the current study.

\textbf{Funding:} This research is supported by NSF of China (Grant No. 12361015), and
by NSF of Xinjiang Uygur Autonomous Region, P. R. China (Grant No. 2025D01A09).


\begin{thebibliography}{00}


\bibitem{Fan}
A. H. Fan, {\em Spectral measures on local fields}, Springer Proc. Math. Stat., 150. Springer, Cham, 2015  15--35.

\bibitem{FFS}
A. H. Fan, S. L. Fan and R. X. Shi, {\em Compact open spectral sets in $\mathbb{Q}_p$,}
J. Funct. Anal. 271 (2016) 3628--3661.

\bibitem{FFLS}
A. H. Fan, S. L. Fan L. M. Liao and R. X. Shi, {\em Fuglede's conjecture  holds in $\mathbb{Q}_p$,}
Math. Ann. 375 (1-2) (2019)  315--341.

\bibitem{Fuglede1974}
B. Fuglede, {\em Commuting self-adjoint partial differential operators and
a group theoretic problem}, J. Funct. Anal., 16 (1974)  101--121.

\bibitem{Fuglede2001}
B. Fuglede, {\em Orthogonal exponentials on the ball}, Expo. Math. 19 (3) (2001) 267--272.

\bibitem{FMM2006}
B. Farkas, M. Matolcsi and P. M\'{o}ra, {\em On Fuglede's conjecture and the existence
of universal spectra,} J. Fourier Anal. Appl. 12 (5) (2006) 483--494.


\bibitem{Iosevich-Mayeli-Pakianathan2015}
A. Iosevich, A. Mayeli and J. Pakianathan,{\em The Fuglede conjecture holds in $\mathbb{Z}_p\times \mathbb{Z}_p$,}
Anal. PDE, 10 (4) (2017) 757--764.

\bibitem{Iosevich-Katz-Tao}A. Iosevich, N. Katz, T. Tao, {\em Convex bodies with a point of curvature do not have Fourier bases},
Amer. J. Math. 123(1) (2001) 115--120.

\bibitem{Iosevich-Katz-Tao2008}
A. Iosevich, N. Katz and T. Tao, {\em The Fuglede spectral conjecture holds for
convex planar domains,} Math. Res. Lett. 10 (5-6) (2003)  559--569.

\bibitem{Iosevich-Pedersen1998}
A. Iosevich and S. Pedersen, {\em Spectral and tiling properties of the unit cube,}
Int. Math. Res. Notices 16 (1998)  819--828.

\bibitem{Kolountzakis2000}
M. Kolountzakis, {\em Non-symmetric convex domains have no basis of exponentials,} Illinois J. Math. 44 (2000)  542--550.


\bibitem{KM2006}
M. Kolountzakis and M. Matolcsi, {\em Tiles with no spectra,} Forum Math., 18 (2006)  519--528.

\bibitem{KM2006a}
M. Kolountzakis and M. Matolcsi, {\em Complex Hadamard matrices and the
spectral set conjecture,} Collec. Math. Vol. Extra (2006)  281--291.

\bibitem{Laba2001}
I. Laba, {\em Fuglede's conjecture for a union of two intervals,} Proc. Amer. Math.
Soc. 129 (2001)  2965--2972.


\bibitem{Lagarias-Reed-Wang2000}
J.  Lagarias, J.  Reed and Y. Wang, {\em Orthonormal bases of exponentials
for the $n$-cube,} Duke Math. J. 103 (2000)  25--37.

\bibitem{LM21}
N.~Lev and M.~Matolcsi, {\em The Fuglede conjecture for convex domains is true in all dimensions},
Acta Math., 228(2) (2022) 385--420.

\bibitem{Matolcsi2005}
M. Matolcsi, {\em Fuglede's conjecture fails in dimension $4$,} Proc. Amer. Math. Soc.,
133 (2005)  3021--3026.

\bibitem{M21}
R. Malikiosis, {\em On the structure of spectral and tiling subsets of cyclic groups},
Forum Math., Sigma, 10 e23 (2022)  1--42.

\bibitem{MK17}
R. Malikiosis, M. Kolountzakis, {\em Fuglede's conjecture on cyclic groups of order {$p^n q$}},
Discrete Anal., Paper No. 12 (2017) 16.

\bibitem{Taibleson1975}
M. Taibleson, {\em Fourier Analysis on Local Fields,} Princeton University Press 2015.

\bibitem{Tao2004}
T. Tao, {\em Fuglede's conjecture is false in 5 and higher dimensions,} Math. Res. Lett., 11 (2004)  251--258.

\bibitem{VVZ1994}
V. Vladimirov, I. Volovich and E. Zelenov, {\em p-adic Analysis and Mathematical Physics,} World Scientific, 1994.

\end{thebibliography}
\end{document}